\definecolor{codegreen}{rgb}{0,0.6,0}
\definecolor{codegray}{rgb}{0.5,0.5,0.5}
\definecolor{codepurple}{rgb}{0.58,0,0.82}
\definecolor{backcolour}{rgb}{0.95,0.95,0.92}
\lstdefinestyle{list_style}{
  backgroundcolor=\color{backcolour}, commentstyle=\color{codegreen},
  keywordstyle=\color{magenta},
  numberstyle=\tiny\color{codegray},
  stringstyle=\color{codepurple},
  basicstyle=\ttfamily\footnotesize,
  breakatwhitespace=false,         
  breaklines=true,                 
  captionpos=b,                    
  keepspaces=true,                 
  numbers=left,                    
  numbersep=5pt,                  
  showspaces=false,                
  showstringspaces=false,
  showtabs=false,                  
  tabsize=2
}
\newcommand{\xdasharrow}[2][->]{
\tikz[baseline=-\the\dimexpr\fontdimen22\textfont2\relax]{
\node[anchor=south,font=\scriptsize, inner ysep=1.5pt,outer xsep=2.2pt](x){#2};
\draw[shorten <=3.4pt,shorten >=3.4pt,dashed,#1](x.south west)--(x.south east);
}
}
\newcommand{\DEBUG}{}
  \def\rem#1{{\marginpar{\raggedright\scriptsize #1}}}
  \newcommand{\pmr}[1]{\rem{\color{blue}{$\bullet$ #1}}}
  \newcommand{\ppr}[1]{\rem{\color{red}{$\bullet$ #1}}}
  \newcommand{\ppr}[1]{}
  \newcommand{\pmr}[1]{}
\def\rho{\varrho_1}
\def\rd{\,{\mathrm d}}
\theoremstyle{plain}
\newtheorem{theorem}{Theorem}
\newtheorem{lemma}{Lemma}
\newtheorem{fact}{Fact}
\newtheorem{corollary}{Corollary}
\newtheorem{proposition}{Proposition}
\theoremstyle{definition}
\newtheorem{remark}{Remark}
\lstdefinestyle{python}{
    language=Python,
    backgroundcolor=\color{gray!5},
    basicstyle=\ttfamily\small,
    keywordstyle=\color{blue},
    stringstyle=\color{orange},
    commentstyle=\color{gray},
    showstringspaces=false,
    breaklines=true,
    frame=single,
    rulecolor=\color{gray!40},
    numbers=left,
    numberstyle=\tiny\color{gray},
    xleftmargin=15pt,
    framexleftmargin=10pt
}
\begin{document}

\title[StPINNs - DL framework for approximation of SDEs]{StPINNs - Deep learning framework for approximation of stochastic differential equations}

\author[M. Baranek]{Marcin Baranek}
\address{AGH University of Krakow,
	Faculty of Applied Mathematics,
	Al. A.~Mickiewicza 30, 30-059 Krak\'ow, Poland}
\email{mbaranek@agh.edu.pl}

\author[P. Przyby\l owicz]{Pawe{\l } Przyby\l owicz}
 \address{AGH University of Krakow,
Faculty of Applied Mathematics,
 Al. A.~Mickiewicza 30, 30-059 Krak\'ow, Poland}
 \email{pprzybyl@agh.edu.pl}
\begin{abstract}
In this paper, we introduce the StPINNs (stochastic physics-informed neural networks) in a systematic manner. This provides a mathematical framework for approximating the solution of stochastic differential equations (SDEs) driven by L\'evy noise using artificial neural networks.
\newline
\newline
\textbf{Key words:} artificial neural networks, stochastic differential equations, stochastic physics-informed neural networks (StPINNs), L\'evy process
\newline
\newline
\textbf{MSC 2010:} 65C30, 68Q25
\end{abstract}
\maketitle
\tableofcontents
\section{Introduction}
We consider the following stochastic differential equation
\begin{equation} \label{main_equation} \left\{ \begin{array}{ll} \displaystyle{ \rd X(t) = a(t,X(t-))\rd t + \sigma \rd L(t), \ t\in [0,T]},\\ 
X(0)=x_0, \end{array} \right. 
\end{equation}
where $x_0\in\mathbb{R}^d$, $\sigma\in\mathbb{R}^{d\times m}$, the drift coefficient $a:[0,T]\times\mathbb{R}^d\to\mathbb{R}^d$ is assumed to be at least continuous, $(L(t))_{t\geq 0}$ denotes an $m$-dimensional L\'evy process, and $X(t-)=\lim\limits_{s\to t, s<t}X(s)$, see \cite{applebaum}. We investigate a non-standard approach to approximating solutions of such equations. In particular, we propose a novel deep learning-based methodology that enables artificial neural networks to learn and replicate the trajectories of the underlying SDE. Hence, the main question we pose in this paper is as follows:
\newline\newline
{\it Can artificial neural networks learn the stochastic dynamics given by the SDE \eqref{main_equation}?}
\newline\newline
We know the answer is positive for deterministic ODEs and PDEs, see, for example, \cite{PINNs_survey}, \cite{ajen}. However, learning stochastic dynamics appears to be a much more involved task. The main problem is that artificial neural networks are deterministic functions. Therefore, we must be precise about exactly what ANNs are approximating when learning solutions to SDEs. The direct approach, known from the ODE case, is not possible for SDE. In our approach, we will use the fact that the solution $X$ lives on the path space $D([0,T],\mathbb{R}^m)$ - the Skorokhod space - and can be expressed as a deterministic functional $\Psi$ of the trajectories of the driving process $L$, see \cite{PSSS_1}. This fact allows us to define a suitable loss function for artificial neural network of a feedforward type. After minimizing the loss (i.e., training), we obtain the artificial neural network that approximates the function $\Psi$. As a byproduct, we get a new approximation algorithm for the SDEs \eqref{main_equation} driven by the additive L\'evy noise.

One of the first attempts in that direction (solving SDEs via artificial neural networks) was \cite{KS_1}. The thesis \cite{KS_1} introduces a neural network-based discretization of the Wong–Zakai approximation for solving SDEs. In this approach, the Brownian motion is replaced by a smooth interpolation, transforming the SDE into a deterministic ODE system. A deep neural network is trained to approximate the ODE solutions by minimizing a loss function reflecting the SDE’s drift–diffusion dynamics. Once trained, the model can generate approximate trajectories of the underlying stochastic process, offering a data-driven alternative to classical numerical schemes.
Numerical experiments show that while the approach successfully reproduces pathwise dynamics, it currently underperforms classical schemes such as Euler–Maruyama and Milstein in terms of convergence rate and computational efficiency, indicating the need for further methodological refinement. In \cite{NS_1}, the proposed method solves stochastic partial differential equations (SPDEs) by combining the Wiener chaos expansion with neural network approximation. The Wiener chaos expansion of the solution is first truncated and deterministic or random neural networks replace the propagators (its deterministic coefficient functions). Deterministic networks are fully trained, while random networks use fixed random hidden-layer parameters, requiring only the linear output layer to be optimized. By leveraging the universal approximation property of neural networks, this framework provides theoretical convergence guarantees and practical algorithms for approximating a wide class of SPDEs. Finally, the article \cite{LPM_1} introduces SPINODE, a stochastic physics-informed neural ordinary differential equation framework designed to learn unknown hidden physics terms inside stochastic differential equations from trajectory data. It represents the hidden physics via neural networks and propagates uncertainty through the known SDE structure to obtain deterministic ODEs governing the time evolution of statistical moments (e.g., means and covariances). Using ODE solvers within the neural ODE paradigm and training via moment-matching with mini-batch gradient descent and adjoint sensitivity, the method fits the neural networks to data-estimated moments. They conclude that SPINODE is flexible and scalable for multivariate stochastic systems with multiplicative noise, but note that learning requires many repeated trajectories.

In this paper, we propose a different approach to that known from \cite{KS_1}, \cite{NS_1}, \cite{LPM_1}. Namely, instead of replacing Brownian motion $W$ (or, in our case the L\'evy process) with some approximation, we transform our semimartingale equation \eqref{main_equation} into a suitable random ODE, with a noise process embedded in the right-hand side function. This methodology allows us to cover the SDEs driven with additive L\'evy noise and is more direct than the method proposed in the papers mentioned above. Compared to the existing literature, we extend the PINN approach (see, for example, \cite{PINNS_O1}, \cite{ajen}) to define its stochastic counterpart, called {\it stochastic physics-informed neural networks (StPINNs)}. We provide a suitable theoretical framework and we present results of numerical experiments.
%
\section{Preliminaries}
For $x\in\mathbb{R}^d$ by $\|x\|$ we mean the Euclidean norm, while for a matrix $A\in\mathbb{R}^{d\times m}$ by $\|A\|$ we mean the Frobenius norm. Let $(\Omega,\mathcal{F},\mathbb{P})$ be a complete probabilistic space, and let $(\mathcal{F}_t)_{t\geq 0}$ be a filtration on that space that satisfies the usual conditions, see \cite{Protter}. We set $\mathcal{F}_{\infty}=\sigma\Bigl(\bigcup_{t\geq 0}\mathcal{F}_t\Bigr)$. For two sub-$\sigma$-fields $\mathcal{A}_1,\mathcal{A}_2$ of $\mathcal{F}$ we define $\mathcal{A}_1\vee\mathcal{A}_2=\sigma(\mathcal{A}_1\cup\mathcal{A}_2)$. We assume that $(L(t))_{t\in [0,T]}$ is an $m$-dimensional L\'evy process on $(\Omega,\mathcal{F},\mathbb{P})$ and with respect to the filtration $(\mathcal{F}_t)_{t\geq 0}$, see Section 4, Chapter I in \cite{Protter}. By $\mathcal{D}^1\left([0,T]\times\Omega;\mathbb{R}^d\right)$ we denote the space of all $\mathbb{R}^d$-valued stochastic processes $(y(t))_{t\in [0,T]}$ that are adapted to $(\mathcal{F}_t)_{t\geq 0}$, have a.s. continuous trajectories with $(\mathcal{F}_t)_{t\geq 0}$-adapted and c\`adl\`ag  derivative processes $(y'(t))_{t\in [0,T]}$, and have the finite norm
\begin{equation}
    \|y\|_1=\Bigl(\mathbb{E}\Bigl[\sup\limits_{0\leq t\leq T}\|y(t)\|^2\Bigr]\Bigr)^{1/2}+\Bigl(\mathbb{E}\int\limits_0^T\|y'(t)\|^2dt\Bigr)^{1/2}<+\infty.
\end{equation}
(See also Lemma \ref{equiv_def_c1} in Appendix.)
\\
We impose the classical assumptions for the drift coefficient $a:[0,T]\times\mathbb{R}^d\to\mathbb{R}^d$:
\begin{itemize}
    \item [(A1)] $a\in C\left([0,T]\times\mathbb{R}^d;\mathbb{R}^d\right)$
    \item [(A2)] there exists $L_a\in [0,+\infty)$ such that for all $t\in [0,T],y_1,y_2\in\mathbb{R}^d$
    \begin{equation}
        \|a(t,y_1)-a(t,y_2)\|\leq L_a\|y_1-y_2\|.
    \end{equation}
\end{itemize}
Since the L\'evy process $(L(t))_{t\in [0,T]}$ is a semimartingale (see Corollary at page 55 in \cite{Protter}), by Theorem 6, page 255 in \cite{Protter}, under the assumptions (A1), (A2) the equation \eqref{main_equation} has a unique strong solution $X$. Moreover, since $X$ has c\`adl\`ag  paths,  we get for all $t\in [0,T]$
\begin{equation}
    \int\limits_0^t a(s,X(s-))ds=\int\limits_0^t a(s,X(s))ds 
    \quad a.s.
\end{equation}
Hence, instead of \eqref{main_equation} in the rest of the paper we will be considering
\begin{equation} \label{main_equation2} \left\{ \begin{array}{ll} \displaystyle{ \rd X(t) = a(t,X(t))\rd t + \sigma \rd L(t), \ t\in [0,T]},\\ 
X(0)=x_0, \end{array} \right. 
\end{equation}
\begin{remark}
We perform our analysis under the standard assumptions (A1),(A2). We stress, however, that our approach can be used for SDEs \eqref{main_equation} with, for example, discontinuous drift or with only local Lipschitz continuous drift.
\end{remark}
\section{Transforming SDE to RODE}
We introduce the process
\begin{equation}
\label{transform_1}
    Y(t)=X(t)-\sigma L(t), \ t\in [0,T],
\end{equation}
and by \eqref{main_equation2} with probability one for all $t\in [0,T]$
\begin{equation}
    Y(t)=x_0+\int\limits_0^t a(s,Y(s)+\sigma L(s))ds.
\end{equation}
Hence, the stochastic process $(Y(t))_{t\in [0,T]}$ solves the following random ordinary differential equation (RODE)
\begin{equation}
\label{main_equation23}
	\displaystyle{
    \mathbb{P}\Bigl(\forall_{t\in [0,T)}Y'(t) = f(t,Y(t),L(t)), Y(0)=x_0\Bigr)=1},
\end{equation} 
 with
\begin{displaymath}
    f(t,y,w)=a(t,y+\sigma w), \quad (t,y,w)\in [0,T]\times\mathbb{R}^d\times\mathbb{R}^m,
\end{displaymath}
and where for $t\in[0,T)$ by $Y'(t)$ we mean the right-hand side derivative. The value of $Y'(T)$ can be defined arbitrarily. Here we take $Y'(T):=Y'(T-)$ a.s., since $Y'(T-)=f(T,Y(T),L(T-))$ is a.s. finite. The process $(Y(t))_{t\in [0,T]}$ is adapted, has   continuous trajectories,  the derivative process $(Y'(t))_{t\in [0,T]}$ has c\`adl\`ag  paths, and is also adapted. Note that the trajectories of $Y$ are, in general, of higher smoothness than the trajectories of $X$. For example, if $L$ is the Wiener process, then the trajectories of $X$ are nowhere differentiable, while the trajectories of $Y$ are even continuously differentiable on $[0,T]$ a.s.

The proof of the following fact is straightforward and therefore omitted.
\begin{fact}
\label{prop_f}
    Under the assumptions (A1), (A2) we have that $f\in C([0,T]\times\mathbb{R}^d\times\mathbb{R}^m;\mathbb{R}^d)$ and for all $t\in [0,T]$, $y_1,y_2\in\mathbb{R}^d$, $w_1,w_2\in\mathbb{R}^m$
    \begin{equation}
        \|f(t,y_1,w_1)-f(t,y_2,w_2)\|\leq C_1\Bigl(\|y_1-y_2\|+\|w_1-w_2\|\Bigr),
    \end{equation}
    and for all $t\in [0,T]$, $y\in\mathbb{R}^d$, $w\in\mathbb{R}^{m}$ 
    \begin{equation}
        \|f(t,y,w)\|\leq C_2(1+\|y\|+\|w\|),
    \end{equation}
    where
    \begin{eqnarray}
    \label{def_C1}
        &C_1=L_a\max\{1,\|\sigma\|\},\\
    \label{def_C2}
        &C_2=\max\Bigl\{\sup\limits_{0\leq t\leq T}\|f(t,0,0)\|,L_a\max\{1,\|\sigma\|\}\Bigr\}.
    \end{eqnarray}
\end{fact}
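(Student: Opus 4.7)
The statement packages three things (continuity, joint Lipschitz, linear growth) about $f(t,y,w)=a(t,y+\sigma w)$, so my plan is simply to peel them off one by one from the two hypotheses on $a$.

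First, for continuity, I would observe that the map $(t,y,w)\mapsto (t,y+\sigma w)$ from $[0,T]\times\mathbb{R}^d\times\mathbb{R}^m$ into $[0,T]\times\mathbb{R}^d$ is continuous (it is affine in $(y,w)$), so composing with $a$, which is continuous by (A1), yields $f\in C([0,T]\times\mathbb{R}^d\times\mathbb{R}^m;\mathbb{R}^d)$.

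For the Lipschitz bound, I would just chain (A2) with the triangle inequality and the standard submultiplicativity of the Frobenius norm: using (A2) on the pair of points $y_1+\sigma w_1,\ y_2+\sigma w_2$ gives the factor $L$, and then
\[
\|(y_1+\sigma w_1)-(y_2+\sigma w_2)\|\le \|y_1-y_2\|+\|\sigma\|\,\|w_1-w_2\|\le \max\{1,\|\sigma\|\}\bigl(\|y_1-y_2\|+\|w_1-w_2\|\bigr),
\]
which produces exactly $C_1=L\max\{1,\|\sigma\|\}$ as defined in \eqref{def_C1}.

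For the linear growth, I would write $\|f(t,y,w)\|\le \|f(t,0,0)\|+\|f(t,y,w)-f(t,0,0)\|$, apply the Lipschitz bound just proved to the second term, and then bound the first term uniformly by $\sup_{0\le t\le T}\|f(t,0,0)\|$; the latter supremum is finite because $t\mapsto f(t,0,0)=a(t,0)$ is continuous on the compact interval $[0,T]$ by (A1). Taking the maximum of the two resulting constants gives $C_2$ as in \eqref{def_C2} and yields the desired bound $C_2(1+\|y\|+\|w\|)$. There is no genuine obstacle here — the only very minor point worth flagging is verifying finiteness of $\sup_{t}\|f(t,0,0)\|$, which is why (A1) is invoked in the definition of $C_2$ rather than left as an abstract assumption.
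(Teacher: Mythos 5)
Your proof is correct; the paper itself omits the argument as ``straightforward,'' and what you write is exactly the intended routine verification (continuity by composition, the Lipschitz bound via (A2) together with $\|\sigma(w_1-w_2)\|\le\|\sigma\|\,\|w_1-w_2\|$, and linear growth by comparing with $f(t,0,0)$ and using compactness of $[0,T]$). Nothing further is needed.
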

%
%
From Theorem 3.1 in \cite{PSSS_1} and Fact \ref{prop_f}, we get that there exists  a Skorokhod measurable function $\Psi:D([0,T],\mathbb{R}^m)\to D([0,T],\mathbb{R}^d)$ such that
\begin{equation}
\label{univ_rep_1}
    Y=\Psi(L),
\end{equation}
where $Y$ is the unique solution to \eqref{main_equation23}. This give us a pathwise representation of the solution $Y$ on the path space, i.e.:
\begin{equation}
    Y(\omega)=\Psi(L(\omega)),
\end{equation}
where, for a fixed $\omega\in\Omega$, $L(\omega)\in D([0,T],\mathbb{R}^m) $ is a fixed trajectory of the L\'evy process $L$. Moreover, from \eqref{transform_1} we have that
\begin{equation}
    X(\omega)=\bar\Psi(L(\omega)):=\Psi(L(\omega))+\sigma L(\omega),
\end{equation}
where $\bar\Psi:D([0,T],\mathbb{R}^m)\to D([0,T],\mathbb{R}^d)$ is also Skorokhod measurable. Therefore, 
\newline\newline
{\it the main idea is to use a deep neural network $\mathcal{N}(\mathrm{w},\cdot)$, which input will be a finite approximation of trajectories of the L\'evy process $L$, to approximate $\Psi$.} 
\newline\newline
In this paper, we focus on the feedforward architecture for $\mathcal{N}(\mathrm{w},\cdot)$. However, other approaches are also possible. (This will be a topic of our future work.)

\begin{remark}
    In the Wiener case, i.e., when $L=W$, we have that by Theorem 1 in \cite{OP_DONET} (see also \cite{CC_1}), there exists an artificial neural network $\mathcal{N}_{\varepsilon}$ that approximates the mapping $\Psi$ uniformly at the desired error level $\varepsilon>0$. In the case where $\Psi$ is only Skorokhod measurable, such results are not known at this time.
\end{remark}
\section{Reformulation of solving SDE as a stochastic optimization problem}
\subsection{Theoretical loss function}\noindent
\newline 
Under assumption (A1), we define the nonlinear functional
\begin{equation}
    \mathcal{\bar L}:\mathcal{D}^1\left([0,T]\times\Omega;\mathbb{R}^d\right)\to [0,+\infty],
\end{equation}
by 
\begin{equation}
    \mathcal{\bar L}(y)=\mathbb{E}\Bigl[H(y,\tau,L)\Bigr], \ y\in \mathcal{D}^1\left([0,T]\times\Omega;\mathbb{R}^d\right),
    \label{eq:loss_function} 
\end{equation}
where 
\begin{equation}
    H(y,\tau,L)=\|x_0-y(0)\|^2+T\cdot\|y'(\tau)-f(\tau,y(\tau),L(\tau))\|^2,
\end{equation}
and the random variable $\tau\sim U(0,T)$, defined on $(\Omega,\mathcal{F},\mathbb{P})$, is independent $\mathcal{F}_{\infty}$. We refer to $\mathcal{\bar L}$ as the {\it theoretical loss function}.

\begin{proposition}
\label{prop_bL_1}
Under assumption (A1) the following holds for all $y\in \mathcal{D}^1\left([0,T]\times\Omega;\mathbb{R}^d\right)$:
        \begin{itemize}
        \item [(i)] 
        \begin{displaymath}
            \mathcal{\bar L}(y)=\mathbb{E}\|x_0-y(0)\|^2+\int\limits_0^T\mathbb{E}\|y'(t)-f(t,y(t),L(t))\|^2dt,
        \end{displaymath}
        \item [(ii)]
        \begin{displaymath}
            \mathcal{\bar L}(y)=0=\inf_{u\in \mathcal{D}^1\left([0,T]\times\Omega;\mathbb{R}^d\right)}\mathcal{\bar L}(u)\Leftrightarrow
            \mathbb{P}\Bigl(\forall_{t\in [0,T)}  \ y'(t)=f(t,y(t),L(t)), \ y(0)=x_0\Bigr)=1.
        \end{displaymath}
    \end{itemize}
\end{proposition}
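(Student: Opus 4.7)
For part (i), the plan is to exploit the independence of $\tau$ from $\mathcal{F}_{\infty}$. Since $y$ is adapted and $L$ is $(\mathcal{F}_t)$-adapted, the three-parameter process $(t,\omega)\mapsto \|y'(t,\omega)-f(t,y(t,\omega),L(t,\omega))\|^2$ is jointly measurable and $\mathcal{F}_{\infty}$-measurable in $\omega$, hence independent of $\tau$. By conditioning on $\tau$ (or equivalently applying Fubini with the product measure $\mathbb{P}\otimes \tfrac{1}{T}\lambda_{[0,T]}$), I get
\begin{equation*}
\mathbb{E}\|y'(\tau)-f(\tau,y(\tau),L(\tau))\|^2=\frac{1}{T}\int_0^T \mathbb{E}\|y'(t)-f(t,y(t),L(t))\|^2\,dt,
\end{equation*}
provided the integrand is measurable and the relevant integrals are well defined. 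Measurability follows from joint measurability of $(t,\omega)\mapsto y(t,\omega)$ and $(t,\omega)\mapsto y'(t,\omega)$ (continuity/càdlàg in $t$, adaptedness in $\omega$) and from continuity of $a$ in assumption (A1); finiteness of the integrals, apart from possibly being $+\infty$, is consistent with the codomain $[0,+\infty]$ of $\mathcal{\bar L}$. Multiplying by $T$ and adding the $\|x_0-y(0)\|^2$ term yields (i).

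For the implication $\Leftarrow$ of part (ii), if $y$ satisfies the RODE a.s. with $y(0)=x_0$, then both $\|x_0-y(0)\|^2=0$ a.s. and $y'(t)-f(t,y(t),L(t))=0$ for all $t\in[0,T)$ a.s., so the formula from (i) gives $\mathcal{\bar L}(y)=0$. Since $\mathcal{\bar L}\geq 0$ everywhere on its domain, this value is automatically the infimum.

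For the converse $\Rightarrow$, suppose $\mathcal{\bar L}(y)=0$. Part (i) gives $\mathbb{E}\|x_0-y(0)\|^2=0$ and $\int_0^T\mathbb{E}\|y'(t)-f(t,y(t),L(t))\|^2\,dt=0$. The first equality yields $y(0)=x_0$ a.s. The second, by Fubini, yields
\begin{equation*}
\mathbb{P}\Bigl(y'(t)=f(t,y(t),L(t)) \text{ for Lebesgue-a.e. } t\in[0,T]\Bigr)=1.
\end{equation*}
The key observation is then that, on a set of full probability, the process $g(t):=y'(t)-f(t,y(t),L(t))$ is càdlàg on $[0,T)$: indeed $y'$ is càdlàg by assumption, $y$ is continuous, $L$ is càdlàg, and $f$ is continuous by Fact~\ref{prop_f} (using (A1)). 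A càdlàg function that vanishes Lebesgue-a.e. on $[0,T)$ vanishes pointwise on $[0,T)$, since every $t\in[0,T)$ is the right-limit of points from the full-measure zero set. This upgrades the a.e. identity to identity for all $t\in[0,T)$ a.s., completing (ii).

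The main obstacle I anticipate is the measurability and integrability bookkeeping needed to justify Fubini in both directions (going from the pointwise evaluation at $\tau$ to the time integral, and then reading off the pointwise identity a.e.\ from a zero integral). Once these technicalities are in place, the core step is the càdlàg-regularity argument that promotes Lebesgue-a.e.\ equality to pointwise equality on $[0,T)$; everything else is routine manipulation of expectations and norms.
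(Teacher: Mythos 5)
Your proposal is correct and follows essentially the same route as the paper: part (i) via conditioning on the independent $\tau$ (the paper invokes the freezing lemma, which is the same computation as your Fubini/conditioning argument), and part (ii) by reducing to a pathwise zero integral and then using right-continuity of the c\`adl\`ag process $t\mapsto\|y'(t)-f(t,y(t),L(t))\|^2$ to upgrade Lebesgue-a.e.\ vanishing to vanishing on all of $[0,T)$ --- exactly the content of the paper's Lemma~\ref{zero_lem_1}, which you re-derive inline with an equivalent right-limit argument.
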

\begin{proof}
For any $y\in \mathcal{D}^1\left([0,T]\times\Omega;\mathbb{R}^d\right)$, by the freezing lemma (see, for example, Lemma 4.1 in \cite{PBaldi}) and by the fact that the $\sigma$-fields $\sigma(\tau)$, $\mathcal{F}_{\infty}$ are independent, we get
    \begin{eqnarray}
        &\mathbb{E}\|y'(\tau)-f(\tau,y(\tau),L(\tau))\|^2=\mathbb{E}\Bigl[ \mathbb{E}\Bigl(\|y'(\tau)-f(\tau,y(\tau),L(\tau))\|^2\ | \ \sigma(\tau)\Bigr)\Bigr]\notag\\
        &=\mathbb{E}\Bigl[ \mathbb{E}\Bigl(\|y'(t)-f(t,y(t),L(t))\|^2\ | \ \sigma(\tau)\Bigr)\Bigl|_{t=\tau}\Bigr]\notag\\
        &=\mathbb{E}\Bigl[ \mathbb{E}\Bigl(\|y'(t)-f(t,y(t),L(t))\|^2\Bigr)\Bigl|_{t=\tau}\Bigr]=\frac{1}{T}\int\limits_0^T\mathbb{E}\Bigl(\|y'(t)-f(t,y(t),L(t))\|^2\Bigr)dt,
    \end{eqnarray}
    which ends the proof of (i).

   We now show (ii). Let us fix an arbitrary $y\in \mathcal{D}^1\left([0,T]\times\Omega;\mathbb{R}^d\right)$. \newline ('$\Leftarrow$') If the process $y$ solves \eqref{main_equation23} then directly from (i) we have
    \begin{eqnarray}
        &\mathcal{\bar L}(y)=\mathbb{E}\|x_0-y(0)\|^2+\mathbb{E}\int\limits_0^T\|y'(t)-f(t,y(t),L(t))\|^2dt=0,
    \end{eqnarray}
    and 
    \begin{equation}
        \inf\limits_{u \in \mathcal{D}^1\left([0,T]\times\Omega;\mathbb{R}^d\right)}\mathcal{\bar L}(u)\geq 0=\mathcal{\bar L}(y)\geq \inf\limits_{u \in \mathcal{D}^1\left([0,T]\times\Omega;\mathbb{R}^d\right)}\mathcal{\bar L}(u).
    \end{equation}
    ('$\Rightarrow$') If $\mathcal{\bar L}(y)=0=\inf_{u\in \mathcal{D}^1\left([0,T]\times\Omega;\mathbb{R}^d\right)}\mathcal{\bar L}(u)$ then by (i) we get that $\mathbb{E}\|y(0)-x_0\|^2=0$ and
    \begin{equation}
        \mathbb{E}\Bigl(\int\limits_0^T\|y'(t)-f(t,y(t),L(t))\|^2dt\Bigr)=0.
    \end{equation}
    This in turn imply $\mathbb{P}(y(0)=x_0)=1$  and
    \begin{equation}
        \mathbb{P}\Bigl(\int\limits_0^T\|y'(t)-f(t,y(t),L(t))\|^2dt=0\Bigr)=1.
    \end{equation}
    Therefore, by Lemma \ref{zero_lem_1}
    \begin{equation}
        \mathbb{P}\Bigl(\forall_{t\in [0,T)}  \ y'(t)=f(t,y(t),L(t))\Bigr)=1,
    \end{equation}
    since trajectories of the nonnegative process $(\|y'(t)-f(t,y(t),L(t))\|^2)_{t\in [0,T]}$ are c\`adl\`ag.
\end{proof}

Below, we state sufficient conditions for the finiteness of $\mathcal{\bar L}.$
\begin{lemma}
\label{lem_bL_finit}
Assume that the drift coefficient $a:[0,T]\times\mathbb{R}^d\to\mathbb{R}^d$ satisfies (A1), (A2). 
    \begin{itemize}
        \item [(a)] If  $\int\limits_0^T\mathbb{E}\|L(t)\|^2dt<+\infty$, then for all $y\in \mathcal{D}^1\left([0,T]\times\Omega;\mathbb{R}^d\right)$ it holds
        \begin{equation}
            0\leq\mathcal{\bar L}(y)<+\infty.
        \end{equation}
        \item [(b)] If  $\int\limits_0^T\mathbb{E}\|L(t)\|^2dt=+\infty$ and, in addition,
        \begin{displaymath}
            (A3) \ \exists_{D_0\in [0,+\infty)}:\forall_{(t,z)\in [0,T]\times\mathbb{R}^d} \ \|a(t,z)\|\leq D_0
        \end{displaymath}
        then for all $y\in \mathcal{D}^1\left([0,T]\times\Omega;\mathbb{R}^d\right)$ it holds
        \begin{equation}
            0\leq\mathcal{\bar L}(y)<+\infty.
        \end{equation}
        \end{itemize}
\end{lemma}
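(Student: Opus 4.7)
The plan is to bound $\bar{\mathcal L}(y)$ term by term using the representation from Proposition \ref{prop_bL_1}(i), namely
\[
    \bar{\mathcal L}(y)=\mathbb{E}\|x_0-y(0)\|^2+\int_0^T\mathbb{E}\|y'(t)-f(t,y(t),L(t))\|^2\,dt,
\]
together with the $\mathcal{C}^1$-regularity of $y$ and the growth information on $f$ coming from Fact \ref{prop_f}. The boundary term $\mathbb{E}\|x_0-y(0)\|^2\leq 2\|x_0\|^2+2\mathbb{E}\|y(0)\|^2$ is finite in both cases because $y\in\mathcal{C}^1([0,T]\times\Omega;\mathbb{R}^d)$, so the real work is to control the integral term.

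For part (a), I would start from the elementary inequality $\|y'(t)-f(t,y(t),L(t))\|^2\leq 2\|y'(t)\|^2+2\|f(t,y(t),L(t))\|^2$ and apply the linear-growth estimate $\|f(t,y,w)\|\leq C_2(1+\|y\|+\|w\|)$ from Fact \ref{prop_f}. Squaring and using $(a+b+c)^2\leq 3(a^2+b^2+c^2)$ yields
\[
    \|y'(t)-f(t,y(t),L(t))\|^2\leq 2\|y'(t)\|^2+6C_2^2\bigl(1+\|y(t)\|^2+\|L(t)\|^2\bigr).
\]
Taking expectation and then integrating on $[0,T]$ (or equivalently applying Tonelli's theorem), each of the resulting four terms is finite: $\mathbb{E}\int_0^T\|y'(t)\|^2\,dt$ and $\mathbb{E}\int_0^T\|y(t)\|^2\,dt$ by the definition of $\mathcal{C}^1([0,T]\times\Omega;\mathbb{R}^d)$, the constant term trivially, and $\int_0^T\mathbb{E}\|L(t)\|^2\,dt$ by the extra hypothesis in (a).

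For part (b), the integrability hypothesis on $L$ is dropped, so the linear-growth bound is insufficient and I would instead use assumption (A3). Since $f(t,y,w)=a(t,y+\sigma w)$, the boundedness of $a$ gives $\|f(t,y,w)\|\leq D_0$ uniformly in $(t,y,w)$, and therefore
\[
    \|y'(t)-f(t,y(t),L(t))\|^2\leq 2\|y'(t)\|^2+2D_0^2.
\]
Taking expectation and integrating gives a bound by $2\mathbb{E}\int_0^T\|y'(t)\|^2\,dt+2D_0^2T<+\infty$, again using the $\mathcal{C}^1$-regularity of $y$.

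There is no real obstacle here: the statement is essentially a measurability-and-growth bookkeeping exercise, and the only point requiring a small amount of care is justifying the interchange of expectation and time integral, which is legitimate by Tonelli since the integrand is nonnegative and jointly measurable in $(t,\omega)$ (the latter following from the adjointness/adaptedness and càdlàg properties built into $\mathcal{C}^1([0,T]\times\Omega;\mathbb{R}^d)$).
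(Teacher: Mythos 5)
Your proposal is correct and follows essentially the same route as the paper: bound the integrand using the linear-growth estimate from Fact \ref{prop_f} in case (a) and the uniform bound $\|f\|\leq D_0$ from (A3) in case (b), then invoke the square-integrability conditions built into $\mathcal{C}^1\left([0,T]\times\Omega;\mathbb{R}^d\right)$ and the hypothesis on $\int_0^T\mathbb{E}\|L(t)\|^2\,dt$. The only (immaterial) difference is that the paper bounds $H(y,\tau,L)$ pathwise and then uses the distribution of $\tau$ to pass to time integrals, whereas you first rewrite $\mathcal{\bar L}$ via Proposition \ref{prop_bL_1}(i) and then estimate the integrand.
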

\begin{proof}
Fix $y\in \mathcal{D}^1\left([0,T]\times\Omega;\mathbb{R}^d\right)$. We have with probability one that
    \begin{equation}
        0\leq H(y,\tau,L)\leq 2\|x_0\|^2+2\|y(0)\|^2+2T\|y'(\tau)\|^2+2T\|f(\tau,y(\tau),L(\tau))\|^2.
    \end{equation}
    By (A1), (A2) we get
    \begin{equation}
        H(y,\tau,L)\leq\max\{2,2T,6TC_2^2\}\Bigl(\|x_0\|^2+\|y(0)\|^2+\|y(\tau)\|^2+\|y'(\tau)\|^2+\|L(\tau)\|^2\Bigr).
    \end{equation}
    Hence,
    \begin{eqnarray}
        &\mathcal{\bar L}(y)\leq\max\{2,2T,6TC_2^2\}\Bigl(\|x_0\|^2+\mathbb{E}\|y(0)\|^2+\frac{1}{T}\int\limits_0^T\mathbb{E}\|y(t)\|^2dt\notag\\
        &\quad +\frac{1}{T}\int\limits_0^T\mathbb{E}\|y'(t)\|^2dt+\frac{1}{T}\int\limits_0^T\mathbb{E}\|L(t)\|^2dt\Bigr)<+\infty.
    \end{eqnarray}
  If $\int\limits_0^T\mathbb{E}\|L(t)\|^2dt=+\infty$ then by the additional assumption (A3) we obtain
  \begin{equation}
      \mathcal{\bar L}(y)\leq 2\max\{1,T,TD_0\}\Bigl(1+\|x_0\|^2+\mathbb{E}\|y(0)\|^2+\frac{1}{T}\int\limits_0^T\mathbb{E}\|y'(t)\|^2dt\Bigr)<+\infty.
  \end{equation}
\end{proof}
 Below, we prove further properties of the theoretical loss function $\mathcal{\bar L}$.
\begin{proposition}
\label{th_lf_prop_1}
    If $\mathbb{E}\int\limits_0^T\|L(t)\|^2 dt<+\infty$ and the assumptions (A1), (A2) hold then:
    \begin{itemize}
        \item [(i)] {\rm (Coercivity)} There exists $\bar C_1\in (0,+\infty)$ such that for all $y\in \mathcal{D}^1\left([0,T]\times\Omega;\mathbb{R}^d\right)$ it holds
        \begin{equation}
            \|y\|_1\leq\bar C_1(1+\mathcal{\bar L}(y))^{1/2}.
        \end{equation}
        \item [(ii)] {\rm (Lipschitz continuity)}There exists $\bar L\in (0,+\infty)$ such that for all $y_1,y_2\in \mathcal{D}^1\left([0,T]\times\Omega;\mathbb{R}^d\right)$ it holds
        \begin{equation}
            |(\mathcal{\bar L}(y_1))^{1/2}-(\mathcal{\bar L}(y_2))^{1/2}|\leq\bar L\|y_1-y_2\|_1. 
        \end{equation}
        \item [(iii)] {\rm (Consistency)} There exist $\bar C_2,\bar C_3\in (0,+\infty)$ such that for all $y\in \mathcal{D}^1\left([0,T]\times\Omega;\mathbb{R}^d\right)$ it holds
        \begin{equation}
            \bar C_2\|y-Y\|_1\leq (\mathcal{\bar L}(y))^{1/2}\leq \bar C_3\|y-Y\|_1,
        \end{equation}
        where $(Y(t))_{t\in [0,T]}$ the unique solution of \eqref{main_equation23}.
    \end{itemize}
\end{proposition}
\begin{proof}
    We start with (i). Define the residuum
    \begin{equation}
        r_y(t)=y'(t)-f(t,y(t),L(t)), \quad t\in [0,T], \ y\in \mathcal{D}^1\left([0,T]\times\Omega;\mathbb{R}^d\right),
    \end{equation}
    then
    \begin{equation}
        \mathcal{\bar L}(y)=\mathbb{E}\|y(0)-x_0\|^2+\int\limits_0^T\mathbb{E}\|r_y(t)\|^2dt.
    \end{equation}
    By Theorem 11 in \cite{HT_1} we have almost surely for all $t\in [0,T]$ that
    \begin{equation}
        y(t)=y(0)+\int\limits_0^t y'(s)ds=y(0)+\int\limits_0^t r_y(s)ds+\int\limits_0^t f(s,y(s),L(s))ds,
    \end{equation}
    since $y'(t)=r_y(t)+f(t,y(t),L(t))$. By Fact \ref{prop_f} we have for all $t\in [0,T]$ almost surely
    \begin{equation}
        \|y(t)\|\leq\max\{1,C_2\}\Biggl( \|y(0)\|+\int\limits_0^T\Bigl(1+\|r_y(s)\|+\|L(s)\|\Bigr)ds\Biggr)+C_2\int\limits_0^t\|y(s)\|ds,
    \end{equation}
    and by the Gronwall lemma
    \begin{equation}
        \|y(t)\|\leq\max\{1,C_2\}\Biggl( \|y(0)\|+\int\limits_0^T\Bigl(1+\|r_y(s)\|+\|L(s)\|\Bigr)ds\Biggr)e^{Ct},
    \end{equation}
    for all $t\in [0,T]$ almost surely. Since $\|y(0)\|\leq \|y(0)-x_0\|+\|x_0\|$, we get 
    \begin{eqnarray}
        &&\sup\limits_{0\leq t\leq T}\|y(t)\|^2\leq 4\max\{1,C_2^2\}\max\{1,3T\}\Bigl(\|x_0\|^2+T+\int\limits_0^T\|L(s)\|^2ds\notag\\
        &&\quad\quad\quad\quad\quad\quad+\|y(0)-x_0\|^2+\int\limits_0^T\|r_y(s)\|^2ds \Bigr)
    \end{eqnarray}
    almost surely. Therefore
    \begin{equation}
    \label{est_esup_coer_1}
        \Bigl(\mathbb{E}\Bigl[\sup\limits_{0\leq t\leq T}\|y(t)\|^2\Bigr]\Bigr)^{1/2}\leq D_1(1+\mathcal{\bar L}(y))^{1/2},
    \end{equation}
    where
    \begin{equation}
        D_1:=2\max\{1,C_2\}\max\{1,(3T)^{1/2}\}\max\Bigl\{\Bigl(\|x_0\|^2+T+\int\limits_0^T\mathbb{E}\|L(s)\|^2ds\Bigr)^{1/2},1\Bigr\}<+\infty.
    \end{equation}
    Since $y'(t)=r_y(t)+f(t,y(t),L(t))$, we get
    \begin{eqnarray}
        \|y'(t)\|^2\leq 2\max\{1,3C_2^2\}\Bigl(1+\sup\limits_{0\leq t\leq T}\|y(t)\|^2+\|r_y(t)\|^2+\|L(t)\|^2\Bigr),
    \end{eqnarray}
    and by \eqref{est_esup_coer_1} we have
    \begin{eqnarray}
        \label{est_esup_coer_2}
        &&\mathbb{E}\int\limits_0^T\|y'(t)\|^2dt\leq 2\max\{1,3C_2^2\}\max\{1,T\}\Biggl(1+\mathbb{E}\Bigl[\sup\limits_{0\leq t\leq T}\|y(t)\|^2\Bigr]\notag\\
        &&\quad\quad\quad\quad+\mathbb{E}\int\limits_0^T\|r_y(t)\|^2dt+\mathbb{E}\int\limits_0^T\|L(t)\|^2dt\Biggr)\notag\\
        &&\leq D_2^2(1+\mathcal{\bar L}(y)),
    \end{eqnarray}
    where
    \begin{equation}
        D_2:=2^{1/2}(D_1^2+1)^{1/2}\max\{1,3^{1/2}C_2\}\max\{1,T^{1/2}\}\Bigl(1+\mathbb{E}\int\limits_0^T\|L(t)\|^2dt\Bigr)^{1/2}<+\infty.
    \end{equation}
    From \eqref{est_esup_coer_1} and \eqref{est_esup_coer_2}, we obtain (i). 
    To show (ii) define the function
    
    \begin{equation}
\Phi:\mathcal{D}^1\left([0,T]\times\Omega;\mathbb{R}^d\right)\to L^{2,2}(\Omega;\mathbb{R}^{2d}),
    \end{equation}
    by
    \begin{equation}
        \Phi(y)=\Biggl(y(0)-x_0,T^{1/2}\Bigl(y'(\tau)-f(\tau,
        y(\tau),L(\tau))\Bigr)\Biggr)=(y(0)-x_0,T^{1/2}r_y(\tau)),
    \end{equation}
    where $L^{2,2}(\Omega;\mathbb{R}^{2d}):=L^{2}(\Omega;\mathbb{R}^{d})\times L^{2}(\Omega;\mathbb{R}^{d})$ with the norm
    \begin{equation}
        \|(X,Y)\|_{2,2}=(\|X\|^2_{L^2(\Omega;\mathbb{R}^d)}+\|Y\|^2_{L^2(\Omega;\mathbb{R}^d)})^{1/2}, \quad (X,Y)\in L^{2,2}(\Omega;\mathbb{R}^{2d}).
    \end{equation}
    Then
    \begin{equation}
        (\mathcal{\bar L}(y))^{1/2}=\|\Phi(y)\|_{2,2}.
    \end{equation}
    Hence, we have for all $y_1,y_2\in\mathcal{D}^1\left([0,T]\times\Omega;\mathbb{R}^d\right)$
    \begin{eqnarray}
        &&|(\mathcal{\bar L}(y_1))^{1/2}-(\mathcal{\bar L}(y_2))^{1/2}|=\Bigl|\|\Phi(y_1)\|_{2,2}-\|\Phi(y_2)\|_{2,2}\Bigl|\leq \|\Phi(y_1)-\Phi(y_2)\|_{2,2},
    \end{eqnarray}
    where
    \begin{equation}
        \|\Phi(y_1)-\Phi(y_2)\|^2_{2,2}=\mathbb{E}\|y_1(0)-y_2(0)\|^2+T\cdot\mathbb{E}\|r_{y_1}(\tau)-r_{y_2}(\tau)\|^2.
    \end{equation}
    We have
    \begin{equation}
        \mathbb{E}\|y_1(0)-y_2(0)\|^2\leq\mathbb{E}\Bigl[\sup\limits_{0\leq t\leq T}\|y_1(t)-y_2(t)\|^2\Bigr],
    \end{equation}
    and for all $t\in [0,T]$, by Fact \ref{prop_f}, 
    \begin{equation}
        \|r_{y_1}(t)-r_{y_2}(t)\|\leq \|y'_1(t)-y'_2(t)\|+C_1\|y_1(t)-y_2(t)\|
    \end{equation}
    with probability one. This gives
    \begin{eqnarray}
        && T\cdot\mathbb{E}\|r_{y_1}(\tau)-r_{y_2}(\tau)\|^2=T\cdot\mathbb{E}\Bigl[\mathbb{E}\Bigl(\|r_{y_1}(\tau)-r_{y_2}(\tau)\|^2 \ | \ \sigma(\tau)\Bigr)\Bigr]\notag\\
        &&=\int\limits_0^T\mathbb{E}\|r_{y_1}(t)-r_{y_2}(t)\|^2dt]\leq  2\int\limits_0^T\mathbb{E}\|y'_1(t)-y'_2(t)\|^2dt\notag\\
        &&\quad\quad\quad +2TC_1^2\mathbb{E}\Bigl[\sup\limits_{0\leq t\leq T}\|y_1(t)-y_2(t)\|^2\Bigr].
    \end{eqnarray}
    Hence, 
    \begin{eqnarray}
        &&\|\Phi(y_1)-\Phi(y_2)\|^2_{2,2}\leq (1+2TC_1^2)\cdot\mathbb{E}\Bigl[\sup\limits_{0\leq t\leq T}\|y_1(t)-y_2(t)\|^2\Bigr]\notag\\
        &&\quad\quad\quad\quad +2\int\limits_0^T\mathbb{E}\|y'_1(t)-y'_2(t)\|^2dt.
    \end{eqnarray}
    Using the inequality $\sqrt{x+y}\leq\sqrt{x}+\sqrt{y}$, $x,y\geq 0$, we arrive at
    \begin{equation}
        \|\Phi(y_1)-\Phi(y_2)\|_{2,2}\leq\bar L\|y_1-y_2\|_1,
    \end{equation}
    with $\bar L=\max\{\sqrt{2},(1+2TC_1^2)^{1/2}\}$. This finish the proof of (ii).

    For the upper bound in (iii), note that, by Corollary \ref{cor_Y_in_C1} (i), $Y\in\mathcal{D}^1\left([0,T]\times\Omega;\mathbb{R}^d\right)$ and $\mathcal{\bar L}(Y)=0$. Hence, by the Lipschitz continuity of $\mathcal{\bar L}^{1/2}$ we get for all $y\in\mathcal{D}^1\left([0,T]\times\Omega;\mathbb{R}^d\right)$
    \begin{equation}
        (\mathcal{\bar L}(y))^{1/2}=|(\mathcal{\bar L}(y))^{1/2}-(\mathcal{\bar L}(Y))^{1/2}|\leq\bar L\|y-Y\|_1.
    \end{equation}
    We now show the lower bound in (iii). Define $e_y(t):=y(t)-Y(t)$ for $y\in\mathcal{D}^1\left([0,T]\times\Omega;\mathbb{R}^d\right)$, $t\in [0,T]$. We have that $(e_y(t))_{t\in [0,T]}\in\mathcal{D}^1\left([0,T]\times\Omega;\mathbb{R}^d\right)$. Moreover, for all $t\in [0,T)$
    \begin{eqnarray}
    \label{der_ey_decomp_1}
        && e'_y(t)=y'(t)-Y'(t)=r_y(t)+\Bigl(f(t,y(t),L(t))-f(t,Y(t),L(t))\Bigr).
    \end{eqnarray}
    Then, for all $t\in [0,T]$
    \begin{eqnarray}
        && e_y(t)=e_y(0)+\int\limits_0^te_y'(s)ds\notag\\
        &&=y(0)-x_0+\int\limits_0^tr_y(s)ds+\int\limits_0^t\Bigl(f(s,y(s),L(s))-f(s,Y(s),L(s))\Bigr)ds,
    \end{eqnarray}
    and by Fact \ref{prop_f}
    \begin{eqnarray}
        &&\|e_y(t)\|\leq \|y(0)-x_0\|+\int\limits_0^t\|r_y(s)\|ds\notag\\
        &&\quad\quad\quad\quad+\int\limits_0^t\|f(s,y(s),L(s))-f(s,Y(s),L(s))\|ds\notag\\
        &&\leq \|y(0)-x_0\|+\int\limits_0^T\|r_y(s)\|ds+C_1\int\limits_0^t\|e_y(s)\|ds
    \end{eqnarray}
    almost surely. By the Gronwall lemma we get for all $t\in [0,T]$
    \begin{equation}
        \|e_y(t)\|\leq \Bigl(\|y(0)-x_0\|+\int\limits_0^T\|r_y(s)\|ds\Bigr)e^{C_1t}
    \end{equation}
    almost surely. This implies that
    \begin{equation}
        \mathbb{E}\Bigl[\sup\limits_{0\leq t\leq T}\|e_y(t)\|^2\Bigr]\leq 2\max\{1,T\}e^{2C_1 T}\cdot\mathcal{\bar L}(y).
    \end{equation}
    From \eqref{der_ey_decomp_1} we get for all $t\in [0,T)$ that
    \begin{equation}
        \|e'_y(t)\|\leq \|r_y(t)\|+C_1\|e_y(t)\|,
    \end{equation}
    and 
    \begin{eqnarray}
        &&\mathbb{E}\int\limits_0^T\|e'_y(t)\|^2dt\leq 2\int\limits_0^T\|r_y(t)\|^2 dt+2C_1T\mathbb{E}\Bigl[\sup\limits_{0\leq t\leq T}\|e_y(t)\|^2\Bigr]\notag\\
        &&\leq 2(1+2C_1T\max\{1,T\}e^{2C_1 T})\cdot\mathcal{\bar L}(y).
    \end{eqnarray}
    Hence,
    \begin{eqnarray}
        \|y-Y\|_1=\Bigl(\mathbb{E}\Bigl[\sup\limits_{0\leq t\leq T}\|e_y(t)\|^2\Bigr]\Bigr)^{1/2}+\Bigl(\mathbb{E}\int\limits_0^T\|e'_y(t)\|^2dt\Bigr)^{1/2}\leq D_3(\mathcal{\bar L}(y))^{1/2},
    \end{eqnarray}
    where
    \begin{equation}
        D_3=\sqrt{2}e^{C_1T}\Bigl(\max\{1,T^{1/2}\}+(1+2C_1T\max\{1,T\})^{1/2}\Bigr)>0.
    \end{equation}
    This ends the proof of lower bound in (iii).
\end{proof}
From the coercivity we have that $\mathcal{\bar L}(y)+\infty$ as $\|y\|_1\to+\infty$. By the consistency property, we have that the sequence of approximations $(y_n)_{n\in\mathbb{N}}$ converges to the solution $Y$ iff $\mathcal{\bar L}(y_n)\to 0$ as $n\to+\infty$.
 \subsection{Optimization problem statement}
 By Proposition \ref{prop_bL_1}, Lemma \ref{lem_bL_finit}, and Corollary \ref{cor_Y_in_C1}, we arrive at the following result for the theoretical loss function $\mathcal{\bar L}$.
 \begin{proposition}
     \begin{itemize}
        \item [(i)] If $\mathbb{E}\int\limits_0^T\|L(t)\|^2 dt<+\infty$ and the assumptions (A1), (A2) hold then the unique solution $(Y(t))_{t\in [0,T]}$ of \eqref{main_equation23} is the global minimizer of $\mathcal{\bar L}$ in  $\mathcal{D}^1\left([0,T]\times\Omega;\mathbb{R}^d\right)$.
        \item [(ii)] If $\mathbb{E}\int\limits_0^T\|L(t)\|^2dt=+\infty$ and the assumptions (A1), (A2), (A3) hold then the unique solution $(Y(t))_{t\in [0,T]}$ of \eqref{main_equation23} is the global minimizer of $\mathcal{\bar L}$ in  $\mathcal{D}^1\left([0,T]\times\Omega;\mathbb{R}^d\right)$.
    \end{itemize}    
 \end{proposition}
By the results above, we can express the solution to \eqref{main_equation23} in terms of finding a global minimizer of $\mathcal{\bar L}$. Note, however, that the corresponding minimization problem
\begin{equation}
\label{inf_dim_rodes}
    \inf\limits_{u\in \mathcal{D}^1\left([0,T]\times\Omega;\mathbb{R}^d\right)}\mathcal{\bar L}(u),
\end{equation}
is infinite-dimensional. Hence, for the practical implementation, instead of considering \eqref{inf_dim_rodes}, we  aim at the following finite-dimensional minimization problem
\begin{equation}
\inf\limits_{\mathrm{w}\in\mathbb{R}^N}\mathcal{L}_n(\mathrm{w}),
\end{equation}
where, for a given $n\in\mathbb{N}$,
\begin{equation}
    \mathcal{L}_n(\mathrm{w})=\mathbb{E}\Bigl[H_n(\mathcal{N}(\mathrm{w},\cdot),\tau,L)\Bigr],
\end{equation}
\begin{eqnarray}
    \label{eq:finte_loss_function}
    &H_n(\mathcal{N}(\mathrm{w},\cdot),\tau,L)=\|\mathcal{N}(\mathrm{w},0,\bar L_{\Delta_n})-x_0\|^2\notag\\
    &+T\cdot\Bigl\|\frac{\partial}{\partial t}\mathcal{N}(\mathrm{w},t,\bar L_{\Delta_n})\Bigl|_{t=\tau}-f(\tau,\mathcal{N}(\mathrm{w},\tau,\bar L_{\Delta_n}),\tilde L_n(\tau))\Bigl\|^2,
\end{eqnarray}
the function $\mathcal{N}:\mathbb{R}^N\times [0,T]\times\mathbb{R}^{m\times n}\to\mathbb{R}^d$ is the feedforward artificial neural network (see, for example, \cite{ajen}) of sufficient regularity (see Theorem \ref{cea_lem_spinns}),
%
\begin{displaymath}
    \Delta_n=\{0=t_0<t_1<\ldots<t_n=T\},
\end{displaymath}
\begin{displaymath}
    \bar L_{\Delta_n}=\{L(t_1),L(t_2),\ldots, L(t_n)\},
\end{displaymath}
and $(\tilde L_n(t))_{t\in [0,T]}$ is: 
\begin{itemize}
    \item [-] either the L\'evy bridge approximation of $L$, i.e.: 
    $\tilde L_n(t)$ is a sample drawn from the conditional law of $L(t) \ | \ \sigma(\bar L_{\Delta_n})$
    see \cite{LB_1},
    \item [-] or the step process based on $\bar L_{\Delta_n}$, i.e.,
    \begin{equation}
        \tilde L_n(t)=\sum\limits_{j=0}^{n-1}L(t_j)\mathbf{1}_{[t_j,t_{j+1})}(t)+L(T)\mathbf{1}_{\{t_n\}}(t).
    \end{equation}
\end{itemize}
Note that, for general L\'evy processes, conditional distributions and simulation of Lévy bridges can be highly nontrivial, especially in the cases of infinite jumps activity (infinite variation). The exact L\'evy bridge simulation is possible, for example, in the case of the Wiener process, and the (compound) Poisson process. Despite of the mentioned drawbacks, the exact simulation $\tilde L_n$ from the L\'evy bridge has the following advantage
\begin{equation}
    \label{popul_loss_prop}\mathcal{L}_n(\mathrm{w})=\mathcal{\bar L}(y_{n,\mathrm{w}})
\end{equation}
where $y_{n,\mathrm{w}}(t,\omega)=\mathcal{N}(\mathrm{w},t,\bar L_{\Delta_n}(\omega))$, see the following fact.
\begin{fact}
   Let  
   \begin{equation}
        F_{(\mathrm{w},t,y)}(z)=\Bigl\|\frac{\partial}{\partial t}\mathcal{N}(\mathrm{w},t,y)-f(t,\mathcal{N}(\mathrm{w},t,y),z)\Bigl\|^2,
    \end{equation}
    for all $(\mathrm{w},t,y)\in \mathbb{R}^N\times [0,T]\times\mathbb{R}^{m\times n}$, $z\in\mathbb{R}^m$.   Then for all $w\in\mathbb{R}^N$ 
   \begin{equation}
       \mathbb{E}\Bigl[F_{(\mathrm{w},\tau,\bar L_{\Delta_n})}(L(\tau))\Bigr]=\mathbb{E}\Bigl[F_{(\mathrm{w},\tau,\bar L_{\Delta_n})}(\tilde L_n(\tau))\Bigr],
   \end{equation}
   where $\tilde L_n$ is the L\'evy bridge.
\end{fact}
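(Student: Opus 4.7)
The plan is to prove the two assertions separately. For the expectation identity, I would use the tower property together with the freezing lemma, exactly as in the proof of Proposition~\ref{prop_bL_1}(i). The key structural fact is that $\tau$ is independent of $\mathcal{F}_\infty$, hence independent of both $\bar L_{\Delta_n}$ and of $(L(t))_{t\in[0,T]}$, so after freezing we can reduce the inner conditioning to $\sigma(\bar L_{\Delta_n})$ alone.

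In detail, let me set $\mathcal{G}=\sigma(\tau)\vee\sigma(\bar L_{\Delta_n})$ and apply the tower property:
$$\mathbb{E}\Bigl[F_{(\mathrm{w},\tau,\bar L_{\Delta_n})}(L(\tau))\Bigr]=\mathbb{E}\Bigl[\mathbb{E}\bigl[F_{(\mathrm{w},\tau,\bar L_{\Delta_n})}(L(\tau))\,\big|\,\mathcal{G}\bigr]\Bigr].$$
Since $(\tau,\bar L_{\Delta_n})$ is $\mathcal{G}$-measurable, while $(L(t))_{t\in[0,T]}$ is independent of $\sigma(\tau)$, the freezing lemma (Lemma 4.1 in \cite{PBaldi}) gives
$$\mathbb{E}\bigl[F_{(\mathrm{w},\tau,\bar L_{\Delta_n})}(L(\tau))\,\big|\,\mathcal{G}\bigr]=\mathbb{E}\bigl[F_{(\mathrm{w},t,y)}(L(t))\,\big|\,\mathcal{G}\bigr]\Bigl|_{t=\tau,\,y=\bar L_{\Delta_n}}.$$
For each fixed $(t,y)$, the random variable $F_{(\mathrm{w},t,y)}(L(t))$ is $\sigma(L)$-measurable and hence independent of $\sigma(\tau)$, so the inner conditional expectation collapses to $\mathbb{E}\bigl[F_{(\mathrm{w},t,y)}(L(t))\,\big|\,\sigma(\bar L_{\Delta_n})\bigr]$. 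Substituting back and taking the outer expectation yields the claimed identity.

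The second assertion, that the law of $L$ conditioned on $\sigma(\bar L_{\Delta_n})$ is a L\'evy bridge, is essentially the definition of the multi-point L\'evy bridge pinned at $t_1,\dots,t_n$, as constructed in \cite{LB_1}; the independent-increments property of $L$ decomposes this conditional law across the subintervals $[t_i,t_{i+1}]$ into the product of single-interval L\'evy bridges, which I would invoke directly rather than re-derive.

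The main technical subtlety is ensuring joint Borel measurability of the map $(t,y,z)\mapsto F_{(\mathrm{w},t,y)}(z)$, which is required to legitimately apply the freezing lemma and to make sense of the substitution $t=\tau,\,y=\bar L_{\Delta_n}$ in a conditional expectation. This follows from continuity of $\mathcal{N}$ and $\partial_t\mathcal{N}$ (standard for a smooth feedforward architecture) together with continuity of $f$ granted by (A1), so no new analytic difficulty arises beyond what is already in place.
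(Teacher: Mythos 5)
Your argument is correct and follows essentially the same route as the paper's proof: tower property with respect to $\sigma(\tau)\vee\sigma(\bar L_{\Delta_n})$, the freezing lemma to substitute $t=\tau$, $y=\bar L_{\Delta_n}$, and then independence of $\sigma(\tau)$ from $\sigma(L(t))\vee\sigma(\bar L_{\Delta_n})$ to drop $\sigma(\tau)$ from the inner conditioning. Your additional remarks on joint measurability and on identifying the conditional law as the multi-point L\'evy bridge via \cite{LB_1} are sound and only make explicit what the paper leaves implicit.
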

\begin{proof}
    By the properties of conditional expectation and the fact that $\sigma(\tau)$ and $\sigma(L(t))\vee\sigma(\bar L_{\Delta_n})$ are independent $\sigma$-fields, we get
   \begin{eqnarray}
        &\mathbb{E}\Bigl[F_{(\mathrm{w},\tau,\bar L_{\Delta_n})}(L(\tau))\Bigr]=\mathbb{E}\Biggl[\mathbb{E}\Bigl(F_{(\mathrm{w},\tau,\bar L_{\Delta_n})}(L(\tau)) \ \Bigl| \ \sigma(\tau)\vee\sigma(\bar L_{\Delta_n})\Bigr)
        \Biggr]\notag\\
        &=\mathbb{E}\Biggl[\mathbb{E}\Bigl(F_{(\mathrm{w},t,y)}(L(t)) \ \Bigl| \ \sigma(\tau)\vee\sigma(\bar L_{\Delta_n})\Bigr)\Bigl|_{t=\tau, y=\bar L_{\Delta_n}}
        \Biggr]\notag\\
        &=\mathbb{E}\Biggl[\mathbb{E}\Bigl[F_{(\mathrm{w},t,y)}(L(t)) \ | \ \sigma(\bar L_{\Delta_n})\Bigr]\Bigl|_{t=\tau,y=\bar L_{\Delta_n}}\Biggr]\notag\\
        &=\mathbb{E}\Biggl[\mathbb{E}\Bigl[F_{(\mathrm{w},t,y)}(\tilde L_n(t)) \ | \ \sigma(\bar L_{\Delta_n})\Bigr]\Bigl|_{t=\tau,y=\bar L_{\Delta_n}}\Biggr]=\mathbb{E}\Bigl[F_{(\mathrm{w},\tau,\bar L_{\Delta_n})}(\tilde L_n(\tau))\Bigr],
    \end{eqnarray}
    since $L(t)$ and $\tilde L_n(t)$ have the same conditional law given $\sigma(\bar L_{\Delta_n})$.
\end{proof}
The adaptedness of $y_{n,\mathrm{w}}$ can be provided by enforcing causality in the neural network architecture by, for example,  plugins the inputs $L(t_j)$ up to the current time $t\in [t_j,t_{j+1})$ and filling the rest of the inputs (for $t_j> t$) with zeros. 

We refer to $\mathcal{L}_n$ as to {\it approximated loss function}. 

\begin{remark}
    In general, the theoretical loss function $\mathcal{\bar L}$ is not convex, even for $\sigma=0$. Consider the following counterexample. Set $\sigma=0$, $a(t,x)=\sqrt{x^2+\varepsilon}-1$, $\varepsilon=0.01$, $d=1$, $T=1$, $x_0=0$. Then $a=a(t,x)$ is Lipschitz continuous wrt $x$. Take $y_c(t,\omega)=c\in\mathbb{R}$ for all $t\in [0,1]$, $\omega\in\Omega$. Then $y_c\in\mathcal{D}^1([0,T]\times\Omega;\mathbb{R}^d)$ for all $c\in\mathbb{R}$, $\mathcal{\bar L}(y_c)=c^2+(\sqrt{c^2+\varepsilon}-1)^2$, and for $c_1=-0.1$, $c_2=-c_1=0.1$ we get that
    \begin{equation}
        \mathcal{\bar L}\Bigl(\frac{y_{c_1}+y_{c_2}}{2}\Bigr)-\frac{\mathcal{\bar L}(y_{c_1})+\mathcal{\bar L}(y_{c_2})}{2}=\frac{20\sqrt{2}-22}{100}>0.
    \end{equation}
\end{remark}
\subsection{SGD algorithm}
For fixed $n\in\mathbb{N}$, by applying the Robbins-Monro stochastic approximation algorithm \cite{robsieg}, we get
\begin{eqnarray}
\label{SGD_H_n}
    &\mathrm{w}^0\in\mathbb{R}^N,\notag\\
    &\mathrm{w}^{k+1}=\mathrm{w}^k-\eta_k\cdot\nabla_{\mathrm{w}} H_n(\mathcal{N}(\mathrm{w},\cdot),\tau_k,L_k)\Bigl|_{\mathrm{w}=\mathrm{w}^k}
\end{eqnarray}
for $k=0,1,2,\ldots$, where $(\tau_k)_{k\geq 0}$ is an iid sequence from $U(0,T)$, $(L_k)_{k\geq 0}$ is a sequence of independent L\'evy processes (that is also independent of $(\tau_k)_{k\geq 0}$), and we impose the following assumption on the sequence of learning rates $\{\eta_k\}_{k\geq 0}$
\begin{displaymath}
    \sum\limits_{k\geq 0}\eta_k=+\infty, \ \sum\limits_{k\geq 0}\eta^2_k<+\infty.
\end{displaymath}
After training the neural network and obtaining $\textrm{w}^*$, we get the neural network such that 
\begin{equation}
    \mathcal{N}(\mathrm{w}^*,t,\bar L_{\Delta_n}(\omega))\approx (\Psi(L(\omega)))(t),
\end{equation}
and we define the approximation to $X$ as
\begin{equation}
    \bar X_n(t,\omega)=\mathcal{N}(\mathrm{w}^*,t,\bar L_{\Delta_n}(\omega))+\sigma \tilde L_n(t,\omega), \quad t\in [0,T], \omega\in\Omega.
\end{equation}

The resulting scheme, together with the above derivations, introduces the so-called {\it Stochastic Physics-Informed Neural Networks} (StPINNs). We have the following C\'ea-type quasi-optimality result.
\begin{theorem}
\label{cea_lem_spinns}
        Let the assumptions (A1), (A2) hold, $y_{n,\mathrm{w}}\in\mathcal{D}^1\left([0,T]\times\Omega;\mathbb{R}^d\right)$ for all $\mathrm{w}\in\mathbb{R}^N$,  $n\in\mathbb{N}$. Moreover, assume that the exact L\'evy bridge simulation is possible. Then there exists $\gamma\in (0,+\infty)$ such that for all $\mathrm{w}^*\in\mathbb{R}^N$, $n\in\mathbb{N}$
    \begin{equation}
        \|y_{n,\mathrm{w}^*}-Y\|_1\leq \gamma\Bigl(\inf\limits_{\mathrm{w}\in\mathbb{R}^N}\|y_{n,\mathrm{w}}-Y\|_1+(\mathcal{L}_n(\mathrm{w}^*)-\inf\limits_{\mathrm{w}\in\mathbb{R}^N}\mathcal{L}_n(\mathrm{w}))^{1/2}\Bigr)
    \end{equation}
    and if $\mathrm{w}^*\in argmin_{\mathrm{w}\in\mathbb{R}^N} \ \mathcal{L}_n(\mathrm{w})$ then
    \begin{equation}
        \|y_{n,\mathrm{w}^*}-Y\|_1\leq \gamma\inf\limits_{\mathrm{w}\in\mathbb{R}^N}\|y_{n,\mathrm{w}}-Y\|_1.
    \end{equation}
\end{theorem}
\begin{proof}
    By \eqref{popul_loss_prop} and  Proposition \ref{th_lf_prop_1} (iii) we have that for all $\mathrm{w}^*\in\mathbb{R}^N$, $n\in\mathbb{N}$, 
    \begin{eqnarray}
        &&\|y_{n,\mathrm{w}^*}-Y\|_1\leq\frac{1}{\bar C_2}(\mathcal{\bar L}(y_{n,\mathrm{w}^*}))^{1/2}=\frac{1}{\bar C_2}(\mathcal{L}_n(\mathrm{w}^*))^{1/2}\notag\\
        &&\leq\frac{1}{\bar C_2}\Bigl((\inf\limits_{\mathrm{w}\in\mathbb{R}^N}\mathcal{L}_n(\mathrm{w}))^{1/2}+(\mathcal{L}_n(\mathrm{w}^*)-\inf\limits_{\mathrm{w}\in\mathbb{R}^N}\mathcal{L}_n(\mathrm{w}))^{1/2}\Bigr),
    \end{eqnarray}
    and
    \begin{equation}
        (\inf\limits_{\mathrm{w}\in\mathbb{R}^N}\mathcal{L}_n(\mathrm{w}))^{1/2}=(\inf\limits_{\mathrm{w}\in\mathbb{R}^N}\mathcal{\bar L}(y_{n,\mathrm{w}}))^{1/2}\leq\bar C_3\inf\limits_{\mathrm{w}\in\mathbb{R}^N}\|y_{n,\mathrm{w}}-Y\|_1.
    \end{equation}
    Combining the inequalities above, we get the thesis.
\end{proof}
Note that when $\sigma=0$, then we recover the classical PINNs, see \cite{ajen}. In this special case the neural network that we train does not depend on $\bar L_{\Delta_n}$, so $\mathcal{N}:\mathbb{R}^N\times [0,T]\to\mathbb{R}^d$. Moreover, $X=Y\in C^1([0,T];\mathbb{R}^d)$, $\|X\|_1=\sup\limits_{0\leq t\leq T}\|X(t)\|+\Bigl(\int\limits_0^T\|X'(t)\|^2dt\Bigr)^{1/2}$ and the approximate loss $\mathcal{L}(\mathrm{w})=\mathcal{\bar L}(\mathcal{N}(\mathrm{w},\cdot))$ does not depend on $n$. Hence, we have the following corollary.
\begin{corollary}
        Let $\sigma=0$, let the assumptions (A1), (A2) hold and assume that $\mathcal{N}(\mathrm{w},\cdot)\in C^1([0,T];\mathbb{R}^d)$ for all $\mathrm{w}\in\mathbb{R}^N$. Then there exists $\gamma\in (0,+\infty)$ such that for all $\mathrm{w}^*\in\mathbb{R}^N$
    \begin{equation}
        \|\mathcal{N}(\mathrm{w}^*,\cdot)-Y\|_1\leq \gamma\Bigl(\inf\limits_{\mathrm{w}\in\mathbb{R}^N}\|\mathcal{N}(\mathrm{w},\cdot)-Y\|_1+(\mathcal{L}(\mathrm{w}^*)-\inf\limits_{\mathrm{w}\in\mathbb{R}^N}\mathcal{L}(\mathrm{w}))^{1/2}\Bigr)
    \end{equation}
    and if $\mathrm{w}^*\in argmin_{\mathrm{w}\in\mathbb{R}^N} \ \mathcal{L}(\mathrm{w})$ then
    \begin{equation}
        \|\mathcal{N}(\mathrm{w}^*,\cdot)-Y\|_1\leq \gamma\inf\limits_{\mathrm{w}\in\mathbb{R}^N}\|\mathcal{N}(\mathrm{w},\cdot)-Y\|_1.
    \end{equation}
\end{corollary}
The main differences between the derivation of StPINNs and PINNs are the use of the universal representation theorem that gives us the existence of the function $\Psi$ in \eqref{univ_rep_1}, and the construction of the loss function for the artificial neural network $\mathcal{N}(\mathrm{w},\cdot,\cdot)$ that, after minimization, allows us to approximate the function $\Psi$. It was not the case in the classical PINNs.  

\begin{remark}
    The results presented in this paper can be applied directly, under the assumptions (A1), (A2), to the SDEs \eqref{main_equation} driven by the L\'evy process $L$ of a finite L\'evy measure. In this case, $L$ can be, for example, a Wiener process, a compound Poisson process, or their linear combination. If the drift coefficient $a$ satisfies (A1), (A2), (A3), then we can consider the case when, for example, $L$ is the Cauchy process.
\end{remark}
\begin{remark}
    There is another option of approximating the loss $\mathcal{\bar L}$. Namely, we can take
    \begin{eqnarray}
        &\tilde H_n(\mathcal{N}(\mathrm{w},\cdot),L)=\|\mathcal{N}(\mathrm{w},0,\bar L_{\Delta_n})-x_0\|^2\notag\\
    &+\frac{T}{n}\sum\limits_{j=0}^{n-1}\Bigl\|\frac{\partial}{\partial t}\mathcal{N}(\mathrm{w},t,\bar L_{\Delta_n})\Bigl|_{t=t_j}-f(t_j,\mathcal{N}(\mathrm{w},t_j,\bar L_{\Delta_n}),L(t_j))\Bigl\|^2.
    \end{eqnarray}
Such a loss function can be used in a more general case when $L$ is a c\'adl\'ag semimartingale, since for such a $\tilde H_n$, we do not need to know the conditional law $L(t)$ given $\bar L_{\Delta_n}$.     
\end{remark}
\subsection{Some insight into the multiplicative noise case - Doss-Sussman
transformation}
Our diffusion removal transformation is only suitable for the SDEs with the additive noise as in \eqref{main_equation} and cannot be directly applied to the SDEs with the multiplicative noise. In the scalar case and $L=W$ being a scalar Wiener process let us consider the SDE
\begin{equation} \label{scalar_m_equation4} \left\{ \begin{array}{ll} \displaystyle{ \rd X(t) = a(t,X(t))\rd t + \sigma X(t)dW(t), \ t\in [0,T]},\\ 
X(0)=x_0>0, \end{array} \right. 
\end{equation}
To transform the above equations into the RODE of the form \eqref{main_equation23} we proceed as follows. First, consider the SDE
\begin{equation} \label{scalar_m_equation5} \left\{ \begin{array}{ll} \displaystyle{ \rd Z(t) = (e^{-Z(t)}a(t,e^{Z(t)})-\sigma^2/2)\rd t + \sigma dW(t), \ t\in [0,T]},\\ 
Z(0)=\ln(x_0), \end{array} \right. 
\end{equation}
and then use the transformation \eqref{transform_1}, i.e., $Y(t)=Z(t)-\sigma W(t)$, to transform the SDE \eqref{scalar_m_equation5} to the RODE
\begin{equation}
\label{main_equation231}
	\displaystyle{
    \mathbb{P}\Bigl(\forall_{t\in [0,T)}Y'(t) = f(t,Y(t),W(t)), Y(0)=\ln(x_0)\Bigr)=1},
\end{equation} 
 where
\begin{displaymath}
    f(t,y,w)=e^{-(y+\sigma w)}a(t,e^{y+\sigma w})-\sigma^2/2, \quad (t,y,w)\in [0,T]\times\mathbb{R}\times\mathbb{R}.
\end{displaymath}
Then, we can apply our StPINN method to approximate $Y$. Since $X(t)=e^{Z(t)}$, what can be verified by the It\^o formula, finally, we get the approximation of $X$. 

Note that both transformations are the particular case of the well-known Doss-Sussman transformation, see \cite{Doss_1}, \cite{Suss_1}. In our future work we investigate the possibility of applying the D-S transformation to the multidimensional SDEs with multiplicative noise to extend the StPINNs approach derived in this paper.
\section{Numerical experiments}
In this section, we focus on numerical experiments. All the code is available at 
\newline
\url{https://github.com/MarcinBaranek/Research/tree/master/SDE-ANN-solver}
\newline
We use a discretization of the time interval $[0,T]$ as a uniform mesh with $n$ elements.
For numerical purpose we take $L$ as the Wiener process.
As the exact solution $X$ of \eqref{main_equation} we treat the solution from the Euler algorithm calculated with precision $float64$ and the number of discretization points equal to $2^{17}$. By $\bar{X}_n$ we mean the solution calculated by the neural network with precision $float32$ and information about $L$ at $n$ points.
To see how perform the network we define errors
\[
    err_{2,n}([0,T]) 
    = \sqrt{\frac{1}{M}\sum_{i=1}^M 
        \frac{T}{n}\sum_{j=1}^n\vert \bar{X}(t_j, L_i) - X(t_j,L_i)\vert^2}\approx\Bigl(\mathbb{E}\int\limits_0^T|\bar X_n(s)-X(s)|^2ds\Bigr)^{1/2},
\]
and the error at the end-point $T$ is approximated by
\[
    err_{2,n}(T) 
    = \sqrt{\frac{1}{M}\sum_{i=1}^M 
        \vert \bar{X}(T, L_i) - X(T,L_i)\vert^2}\approx (\mathbb{E}|\bar X_n(T)-X(T)|^2)^{1/2}.
\]
Those errors are calculated with precision $float64$. We take $M=10000$, unless specified otherwise.

We consider three exemplary equations, where for the first two $L$ is a scalar Wiener process, and for the last example $L$ is a four dimensional Wiener process.
\begin{itemize}[leftmargin=3cm]
    \item[Example 1.]  \begin{equation}
    \label{eq:example1}
    \left\{ \begin{array}{ll} \displaystyle{ \rd X(t) = 5(0.4-X(t-))\rd t + 0.61 \rd L(t), \ t\in [0,1]},\\ 
X(0)=-0.3. \end{array} \right. 
\end{equation}
    \item[Example 2.]  \begin{equation} 
    \label{eq:example2}
    \left\{ \begin{array}{ll} \displaystyle{ \rd X(t) = 5(0.4-\sin(X(t-)))\rd t + 0.61 \rd L(t), \ t\in [0,1]},\\ 
X(0)=-0.3. \end{array} \right. 
\end{equation}
\item[Example 3.]
    \begin{equation}
        \left\{
        \begin{array}{ll} \displaystyle{
            dX_t = 
        5\left(\begin{bmatrix}0.4\\0.7\\-1.2\end{bmatrix} - X_t\right) \, dt 
        + \begin{bmatrix}
            0.61&0.3&0&0.1\\
            -0.3&-0.72&0.45&0\\
            0&0&0.34&-0.81
          \end{bmatrix} \, dL_t,\quad t\in[0,1]},\\
          X(0)= \begin{bmatrix}
              -0.3\\-0.3\\-0.3
          \end{bmatrix}.
          \end{array}
        \right.
    \end{equation}
\end{itemize}
\subsection{Implementation details}

We employ a fully connected deep neural network with three hidden layers, using $\tanh$ as the activation function. A sample implementation of this network is shown in Listing \ref{lst:network}. We assume that the network input has shape $(\mathrm{Batch\ size}, 1+n)$, where $n$ is the number of points in a uniform mesh over the interval $[0, T]$. The output of the network has shape $(\mathrm{Batch\ size}, 1)$.

\begin{lstlisting}[style=python, caption={Sample of code implementing network}, label={lst:network}]
import keras

class Network(keras.Model):
    def __init__(
            self, n_points: int, initial_value: float,
            derivative_initial_value: float
    ):
        super().__init__()
        hidden_layers = [
            keras.layers.Dense(n_points, activation="hard_silu"),
            keras.layers.Dense(512, activation="hard_silu"),
            keras.layers.Dense(256, activation="hard_silu"),
            keras.layers.Dense(128, activation="hard_silu"),
        ]
        self.initial_value = initial_value
        self.derivative_initial_value = derivative_initial_value
        self.hidden_layers = hidden_layers
        self.out = keras.layers.Dense(1)

    def call(self, x):
        t = x[..., :1]
        h = x
        for layer in self.hidden_layers:
            h = layer(h)
        f_theta = self.out(h)
        return self.initial_value\
            + self.derivative_initial_value * t\
            + f_theta * t ** 2 / 2.
\end{lstlisting}

The neural network $\mathcal{N}$ is trained to minimize the loss function defined in \eqref{eq:finte_loss_function}. To accelerate the training process and enforce the initial condition, we constrain the network to satisfy
\[\mathcal{N}(\mathrm{w},0,\bar{L}_{\Delta_n})=x_0.\]
This constraint is imposed through the transformation
\[\bar{\mathcal{N}}(\mathrm{w},t,\bar{L}_{\Delta_n})=x_0 + t\mathcal{N}(\mathrm{w},t,\bar{L}_{\Delta_n}).\]
which guarantees that the initial condition is satisfied identically. As a consequence, the first term in \eqref{eq:finte_loss_function} is identically zero for all network parameters.
Furthermore, prior to training, the expected value of the time derivative of the solution at $t=0$ is known and equals $f(0,x_0,0)$. By combining this information with the imposed initial condition, the neural network architecture can be expressed as
\begin{equation*}
    \bar{\mathcal{N}}(\mathrm{w},t,\bar{L}_{\Delta_n})=x_0 +tf(0,x_0,0) +\frac{t^2}{2}\mathcal{N}(\mathrm{w},t,\bar{L}_{\Delta_n}).
\end{equation*}
This behavior is implemented in lines 31–33 of Listing \ref{lst:network}.

\begin{figure}[H]
        \centering
        \begin{tikzpicture}[
            >=Stealth,
            node distance=0.5cm,
            every node/.style={font=\small},
            box/.style={
                rectangle,
                draw,
                rounded corners,
                minimum width=1.5cm,
                minimum height=1.0cm,
                align=center
            },
            neural/.style={box, fill=blue!10},
            physics/.style={box, fill=orange!15}
        ]
        
        \node[box] (input) {Input $(t,\bar{L}_{\Delta_n})$};
        \node[neural, right=of input] (d1) {Dense (1024) \\ Hard-SiLU};
        \node[neural, right=of d1] (d2) {Dense (512) \\ Hard-SiLU};
        
        \node[neural, below=0.5cm of d2] (d3) {Dense (256) \\ Hard-SiLU};
        \node[neural, left=of d3] (d4) {Dense (128) \\ Hard-SiLU};
        \node[box, left=of d4] (out) {Dense (d) \\ $\bar{\mathcal{N}}(\mathrm{w},t,\bar{L}_{\Delta_n})$};
        
        \node[physics, below=0.6cm of d4, minimum width=3cm] (formula)
        {
        $\displaystyle
        \mathcal{N}(\mathrm{w},t,\bar{L}_{\Delta_n})=x_0 +tf(0,x_0,0) +\frac{t^2}{2}\bar{\mathcal{N}}(\mathrm{w},t,\bar{L}_{\Delta_n})
        $
        };
        
        \draw[->] (input) -- (d1);
        \draw[->] (d1) -- (d2);
        \draw[->] (d2) -- (d3);      
        \draw[->] (d3) -- (d4);
        \draw[->] (d4) -- (out);
        \draw[->] (out) -- (formula);
        
        \end{tikzpicture}
        \caption{Example network architecture.}
        \label{fig:architecture}
\end{figure}
In Figure \ref{fig:architecture} we present an example of network architecture. The number of neurons in each dense layer is our guess, and we believe the similar result are obtainable with different architecture.

\subsection{training details.}
We train the network using randomly sampled trajectories of $L$ at $n$ points, i.e., $\bar L_{\Delta_n}$, together with independently generated time values $\tau$. The loss function is defined in Equation \eqref{eq:loss_function}. In our setup, one epoch corresponds to training the network on a batch of 64 sampled trajectories of $L$.

\begin{figure}[htbp]
\centering

\begin{subfigure}{0.45\textwidth}
    \centering
    \includegraphics[width=\linewidth]{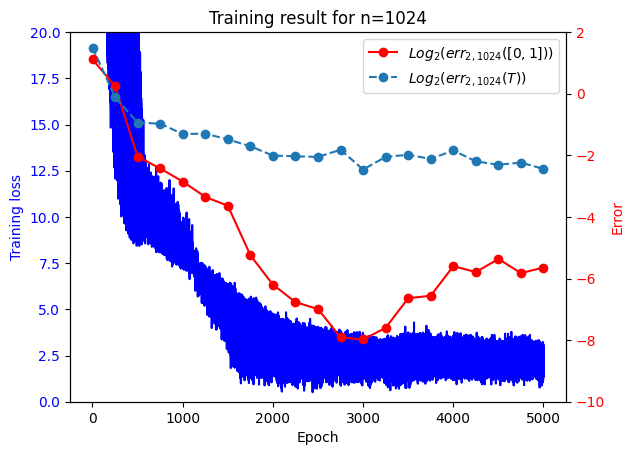}
\end{subfigure}
\hfill
\begin{subfigure}{0.45\textwidth}
    \centering
    \includegraphics[width=\linewidth]{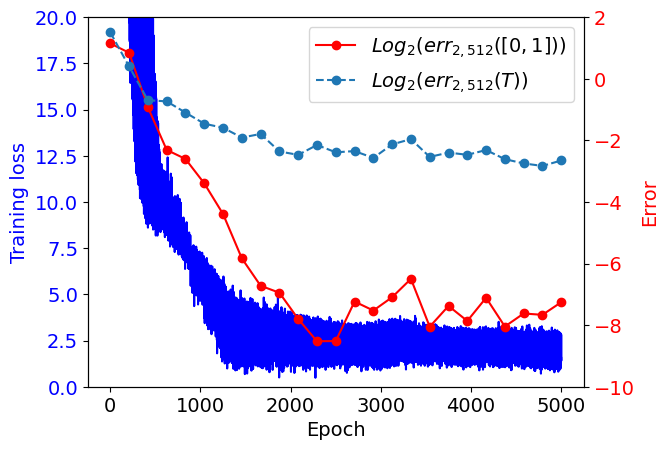}
\end{subfigure}
\vspace{0.1cm}
\begin{subfigure}{0.45\textwidth}
    \centering
    \includegraphics[width=\linewidth]{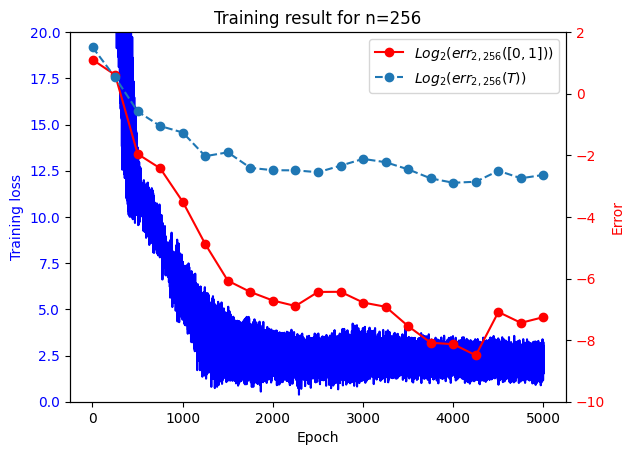}
\end{subfigure}
\hfill
\begin{subfigure}{0.45\textwidth}
    \centering
    \includegraphics[width=\linewidth]{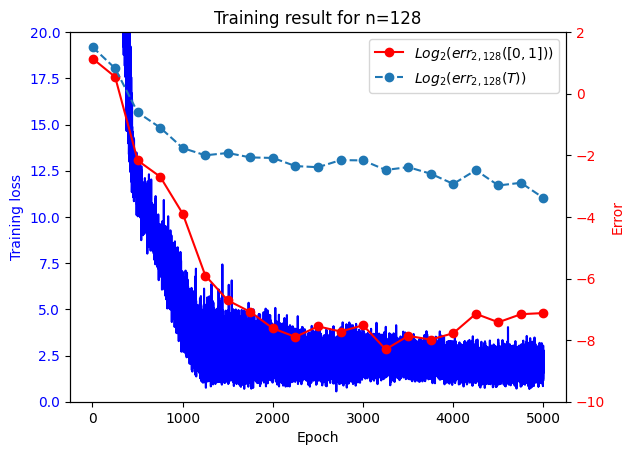}
\end{subfigure}

\caption{Training loss and approximated errors for example 1. with different value of $n$.}
\label{fig:train_loss}
\end{figure}


In Figure \ref{fig:train_loss}, we observe the evolution of the training loss and the approximated errors. The approximation errors are plotted on a logarithmic scale. We hypothesize that the neural network is unlikely to overfit, provided that we use randomly sampled trajectories of $L$, therefor the network is unable to memorize the training set.
The hypothesis is supported empirically by Figure \ref{fig:train_loss}. The training loss stabilizes over time, and the approximation error exhibits a similar behavior. Consequently, no increase in the approximation error is observed during training.

The network $\mathcal{N}$ appears to be 
    \emph{overly influenced} by the trajectories $\bar{L}_{\Delta_n}$ 
    and struggles to solve the ODE (case when $(\bar{L}_{\Delta_n} \equiv 0)$).
    The input contains a lot of numbers representing $\bar{L}_{\Delta_n}$ and only one representing the time, therefore the network may doesn't learn how important the time is. The example solution is in Figure \ref{fig:ode_dummy}.
To make the network paying a more attention to the time, we train it with randomly generated trajectories $\bar{L}_{\Delta_n}$, perform weights optimization, train with trivial trajectories $\bar{L}_{\Delta_n}\equiv0$, perform weights optimization.
The result in the ODE case are presenting in Figure \ref{fig:ode_smart}.
\begin{figure}
    \centering    
    \begin{subfigure}[b]{0.48\textwidth}
        \centering
        \includegraphics[width=1\linewidth]{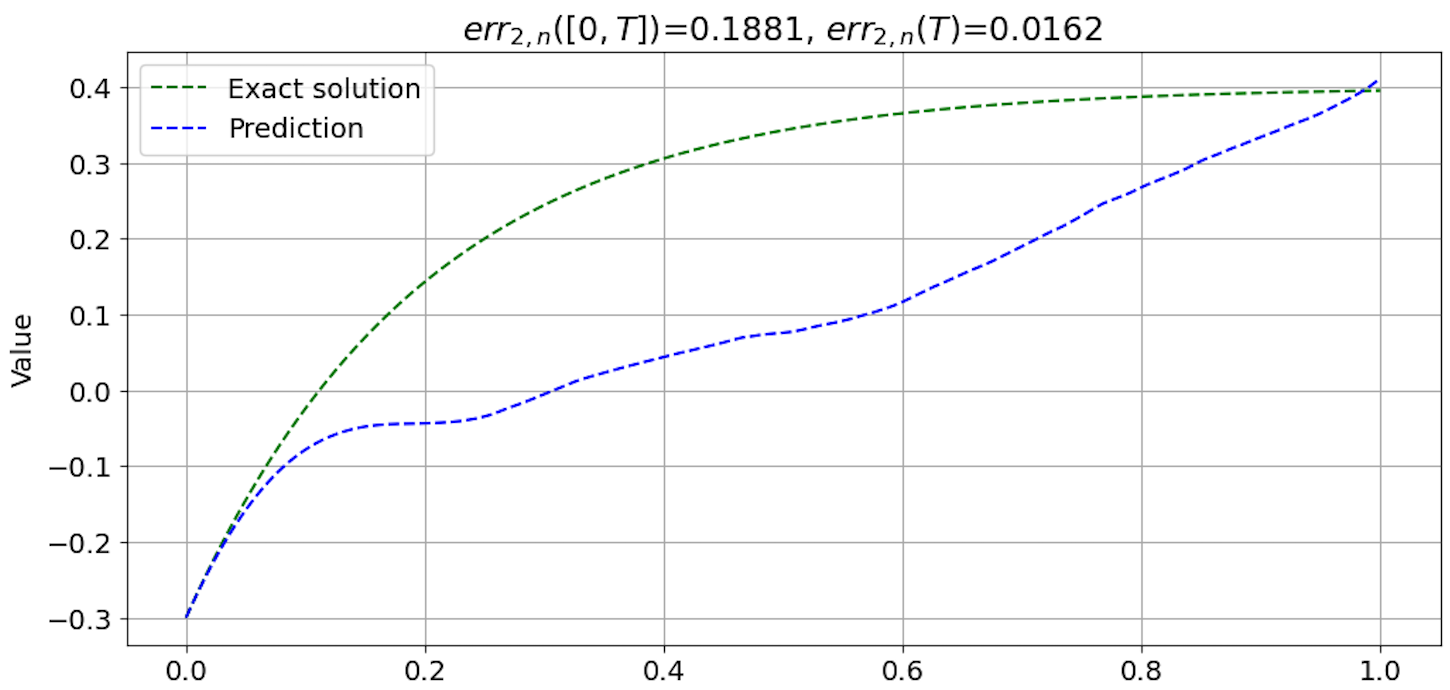}
        \caption{Network performance in solving ODE after standard training procedure.}
        \label{fig:ode_dummy}
    \end{subfigure}
    \hfill
    \begin{subfigure}[b]{0.48\textwidth}
        \centering
        \includegraphics[width=1\linewidth]{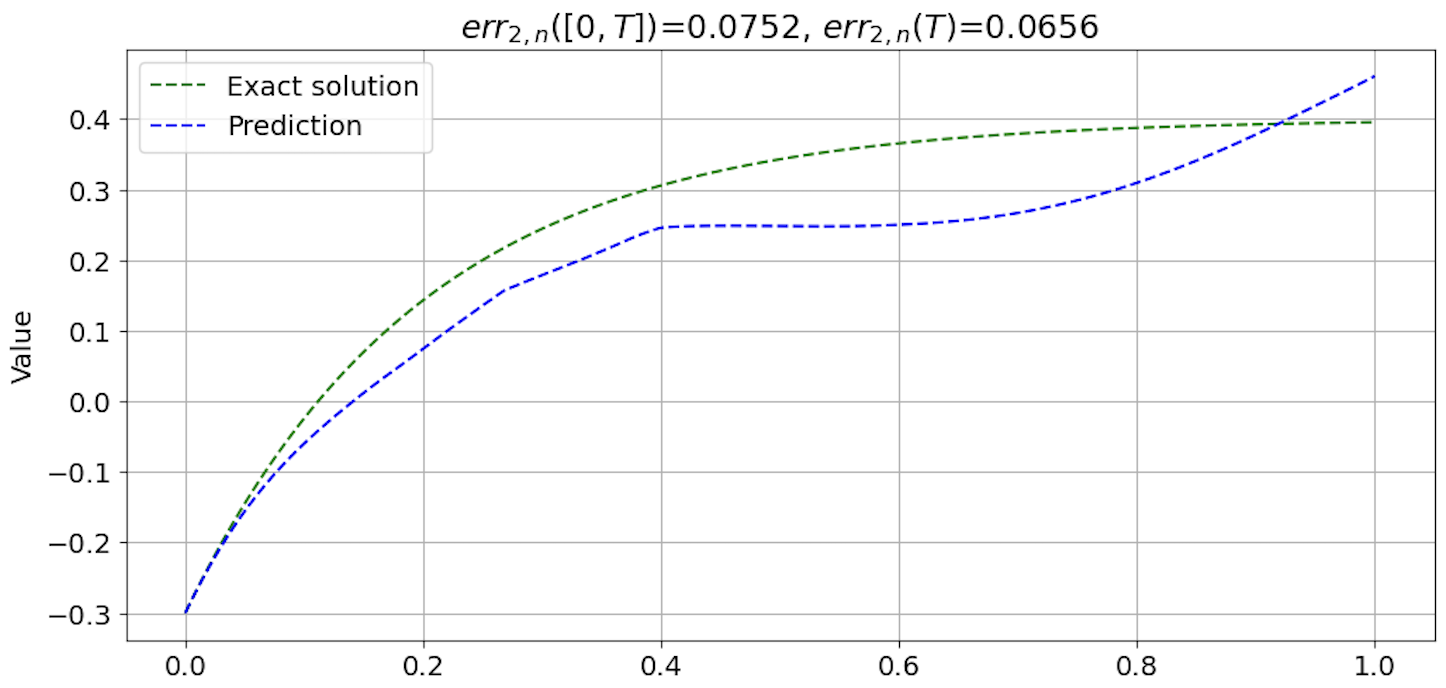}
        \caption{Network performance in solving ODE after training according to new approach.}
        \label{fig:ode_smart}
    \end{subfigure}
    
    \caption{Overall figure caption}
    \label{fig:bothplots}
\end{figure}

        

We observed improved performance; however, there remains room for further optimization. Notably, the network struggles to accurately approximate the solution toward the end of the interval. To address this issue, we introduced a weighting factor by multiplying the loss function by $\tau$, thereby increasing the penalty as $\tau$ grows. The effects of this modification are illustrated in Figure \ref{fig:ode_tau}.

\begin{figure} 
        \centering
        \includegraphics[width=0.8\linewidth]{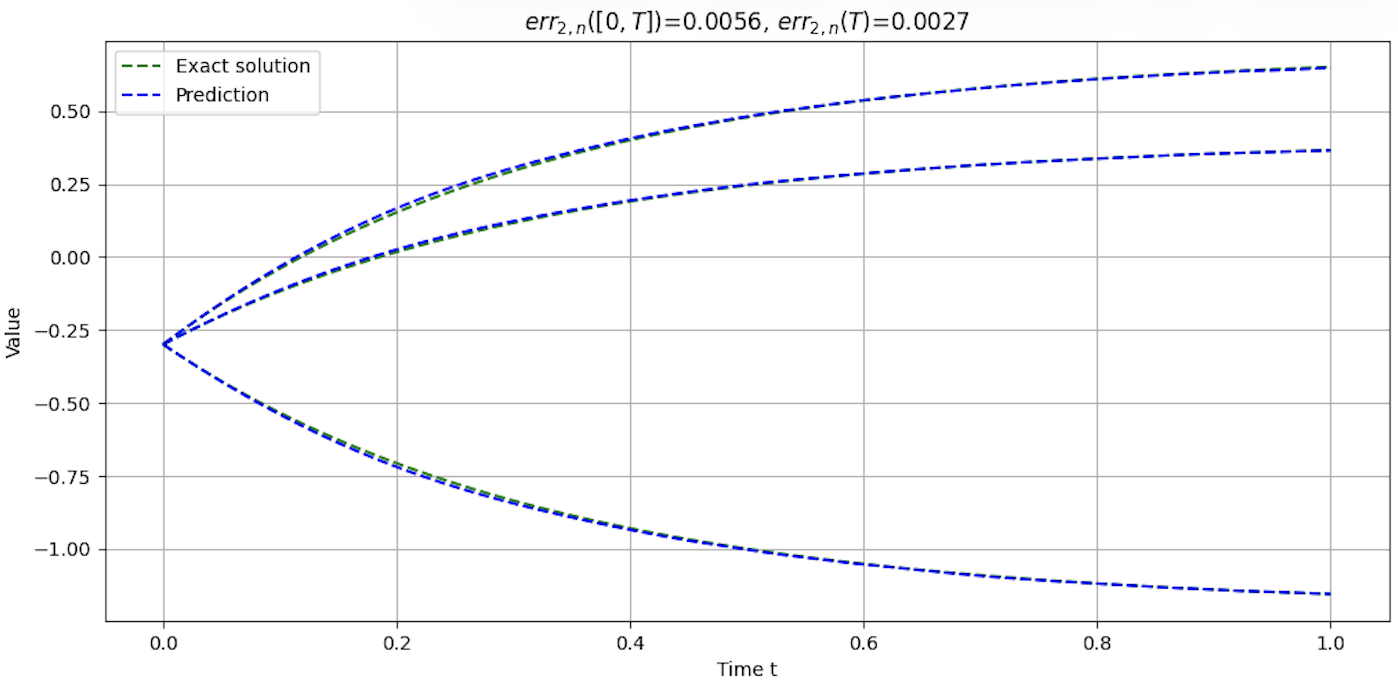}
        \caption{Network performance in solving ODE after training with weighting.}
        \label{fig:ode_tau}
\end{figure}

\subsection{Results}
This subsection focuses on visualizing the model’s performance. We examine the trajectories generated based on the model output and contrast them with the corresponding exact trajectories. This comparison offers insight into how well the learned dynamics replicate the system's true behavior.

In Figure \ref{fig:model_performance}, we present the performance of the model trained to solve equation in example 1. The green trajectory comes from the exact solution, and the blue is calculated by the neural network. The graphs with trajectories are entitled with values of $err_{2,512}([0,T])$ and $err_{2,512}(T)$, but in this case, the errors are calculated for $M=1$.

\begin{figure}[h]
    \centering
    \includegraphics[width=0.8\linewidth, height=10cm]{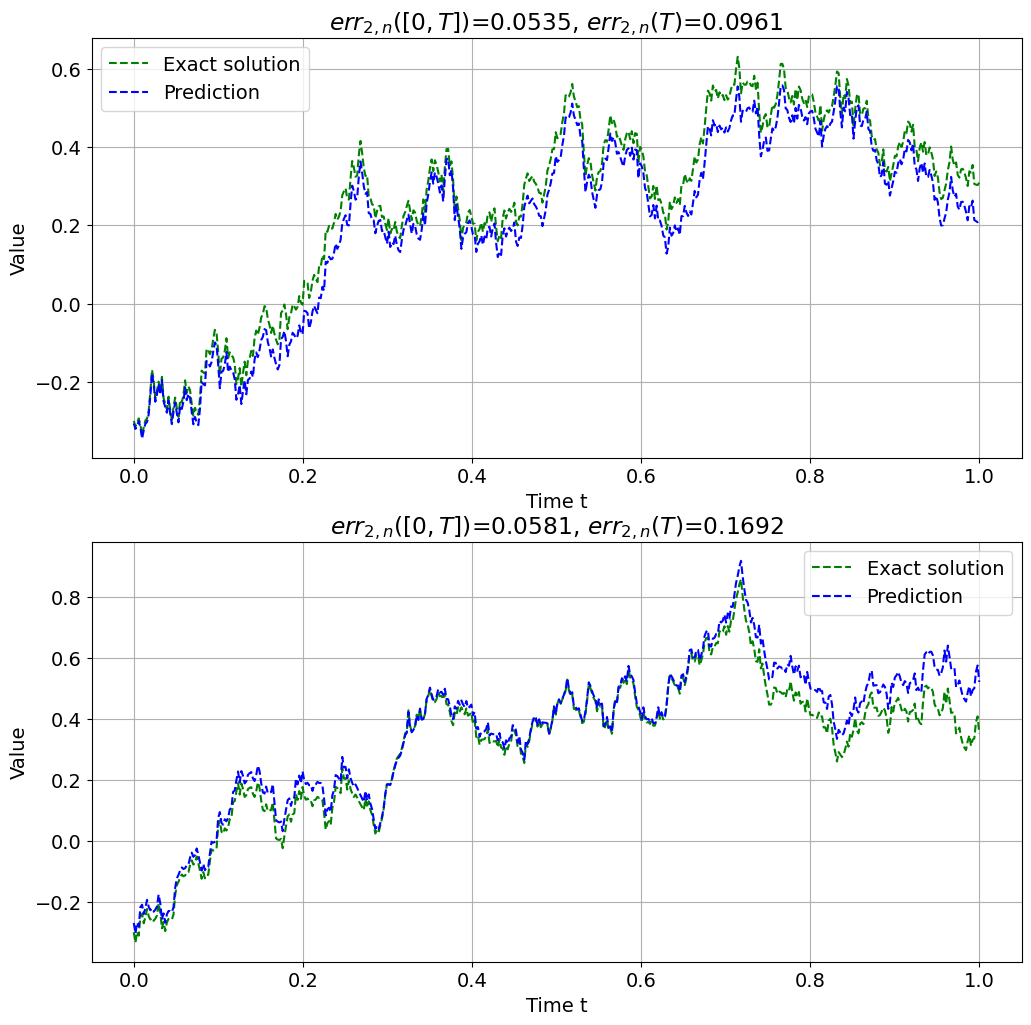}
    \caption{Model performance for example 1.}
    \label{fig:model_performance}
\end{figure}


    
    

The next chart (Fig.~\ref{fig:model_performance_ex2}) presents the performance of the model trained to solve example 2. The results indicate that the model’s performance is highly similar to that obtained for example 1, despite the presence of a nonlinear drift.

\begin{figure}[h]
    \centering
    \includegraphics[width=0.8\linewidth, height=10cm]{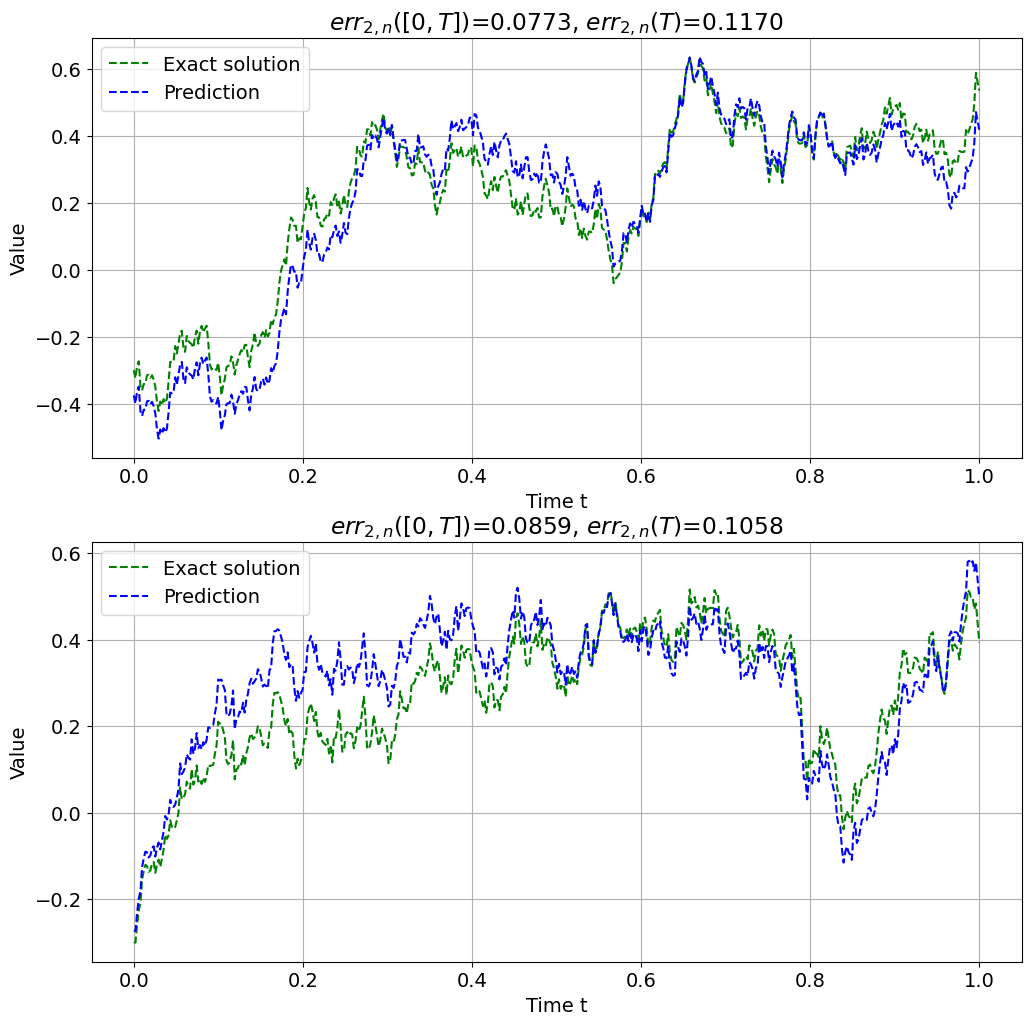}
    \caption{Model performance for example 2.}
    \label{fig:model_performance_ex2}
\end{figure}

Additionally, we examine the behavior of the neural network for Example 1, using as the process 
$L$ both the Poisson process and a composite process consisting of the sum of a Poisson process and a Wiener process.
In Figure \ref{fig:poisson_wiener} we present sample trajectories for such selection of process $L$. The charts on the left side shows trajectories driven by a Poisson process and charts on the right side presents trajectories driven by a sum of Poisson and Wiener processes.

\begin{figure}[h!]
    \centering
    \begin{minipage}[b]{0.48\textwidth}
        \centering
        \includegraphics[width=\textwidth]{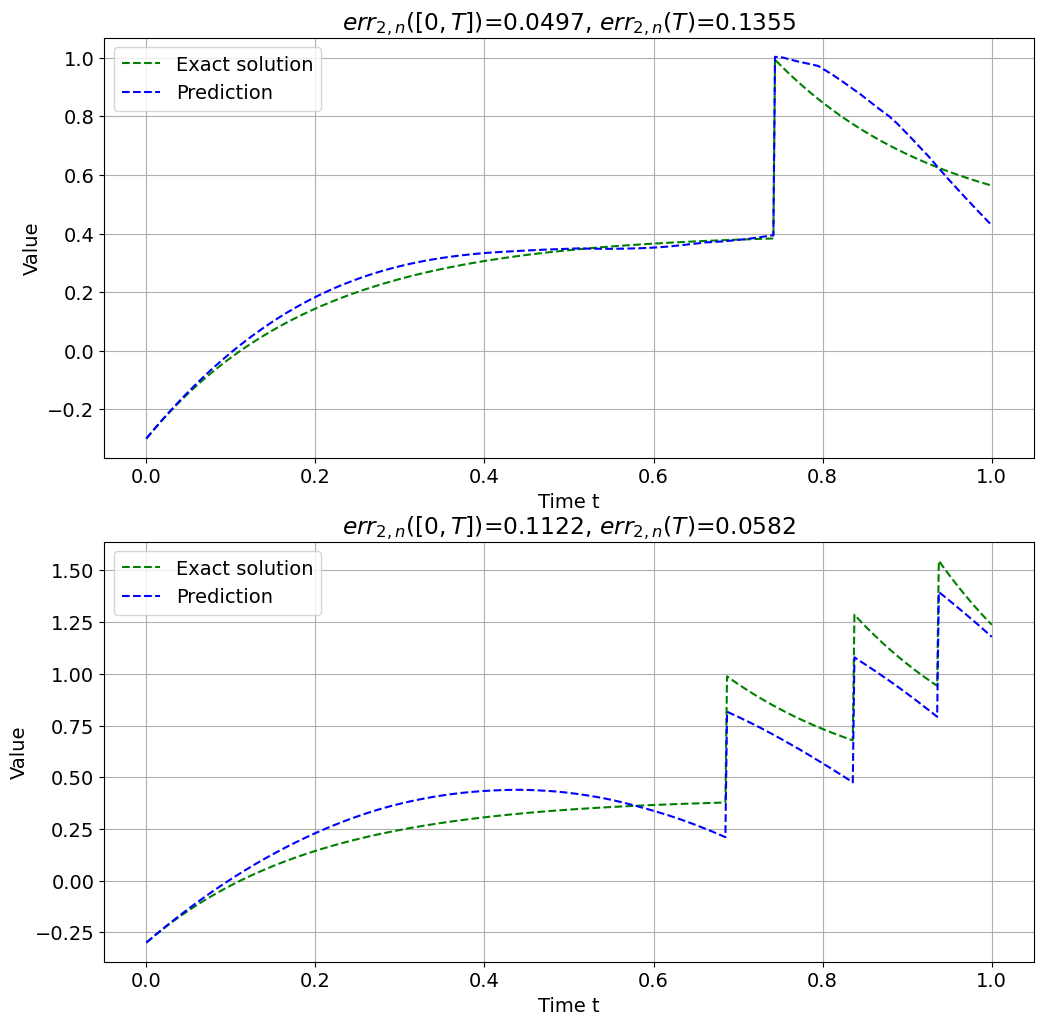}
    \end{minipage}
    \hfill
    \begin{minipage}[b]{0.48\textwidth}
        \centering
        \includegraphics[width=\textwidth]{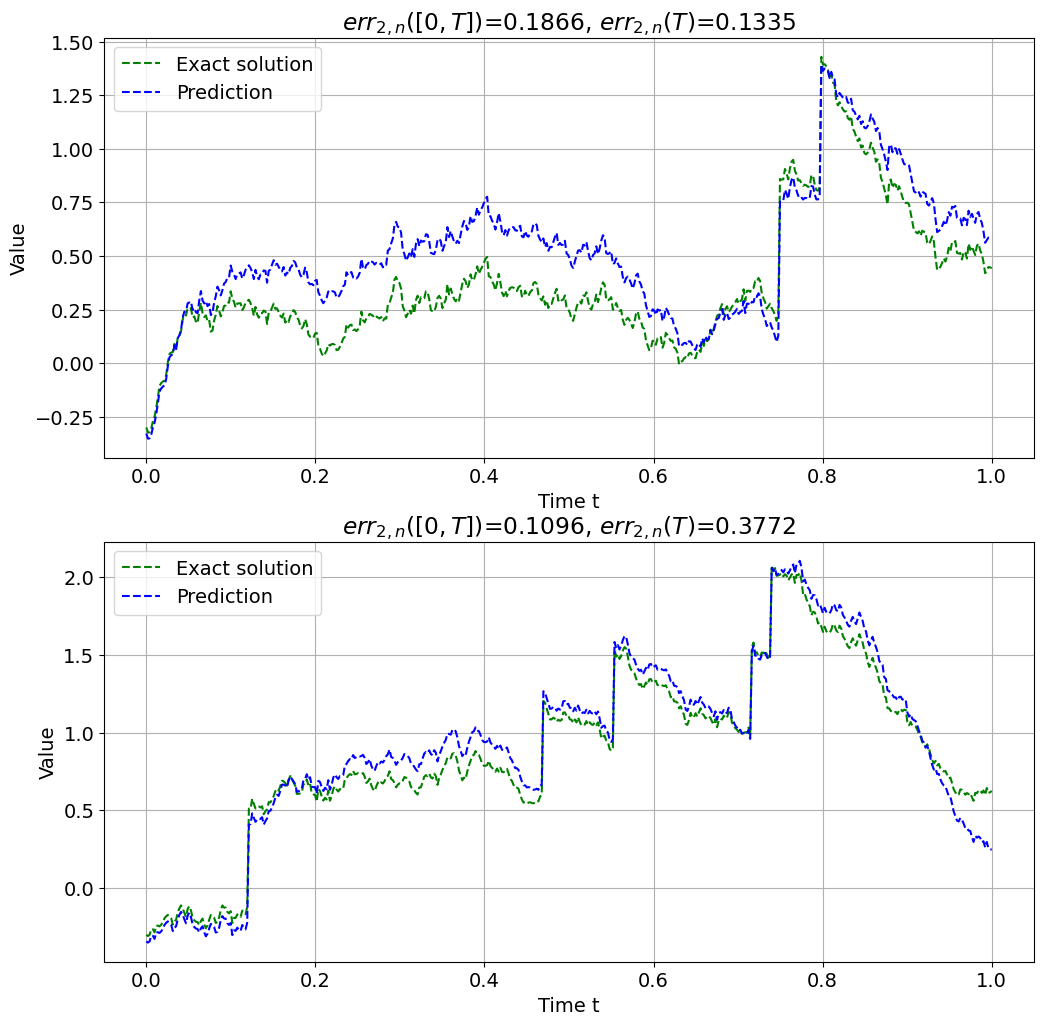}
    \end{minipage}

    \caption{Networks sample trajectories driven by Poisson and sum of Poisson and Wiener process for example 1.}
    \label{fig:poisson_wiener}
\end{figure}

We examine the behavior of the neural network for Example 2, noting that this case corresponds to a nonlinear setting. In this experiment, the driving process 
$L$ is again considered in two variants: a Poisson process and a composite process given by the sum of a Poisson process and a Wiener process.
In Figure \ref{fig:sin_poisson_wiener} we present sample trajectories obtained for this configuration. The charts on the left-hand side show trajectories generated when the system is driven by a Poisson process, while the charts on the right-hand side illustrate trajectories corresponding to the composite process formed by the sum of Poisson and Wiener processes.

\begin{figure}[h!]
    \centering
    \begin{minipage}[b]{0.48\textwidth}
        \centering
        \includegraphics[width=\textwidth]{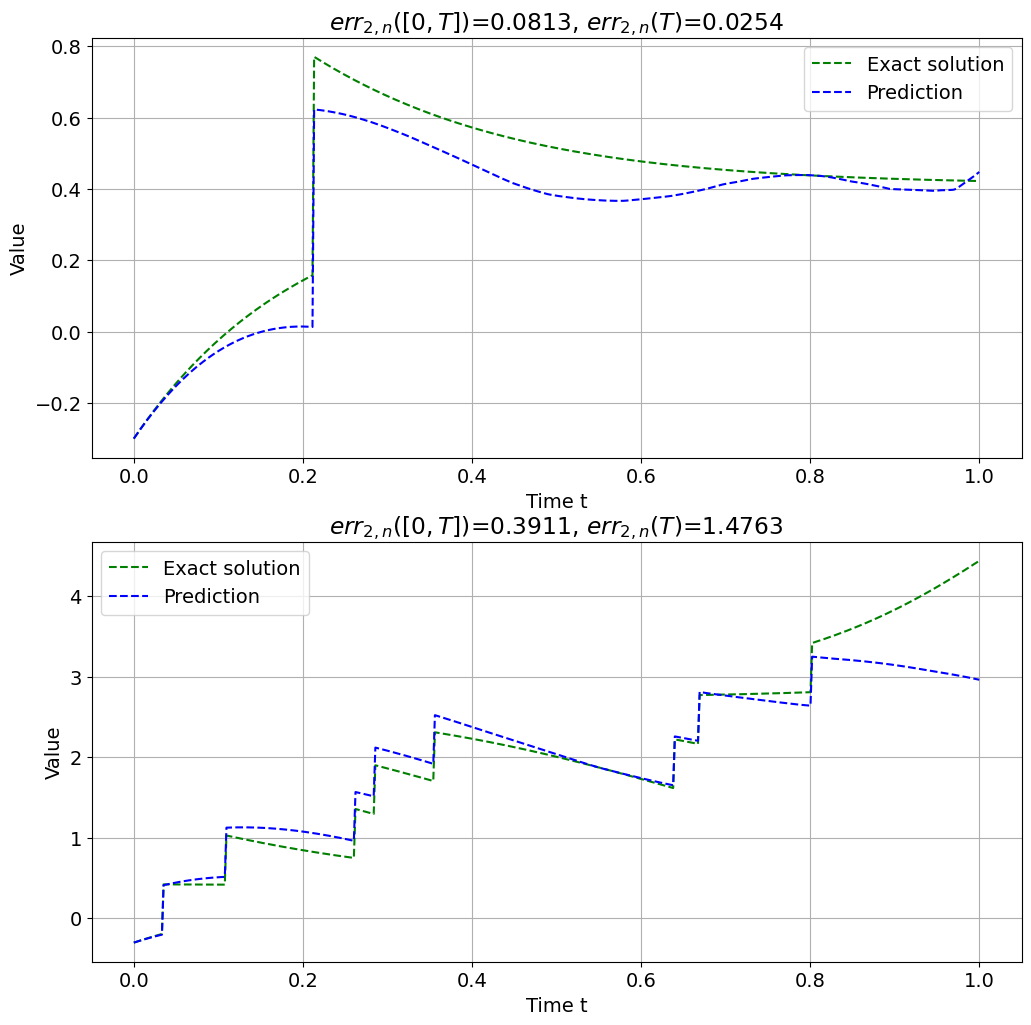}
    \end{minipage}
    \hfill
    \begin{minipage}[b]{0.48\textwidth}
        \centering
        \includegraphics[width=\textwidth]{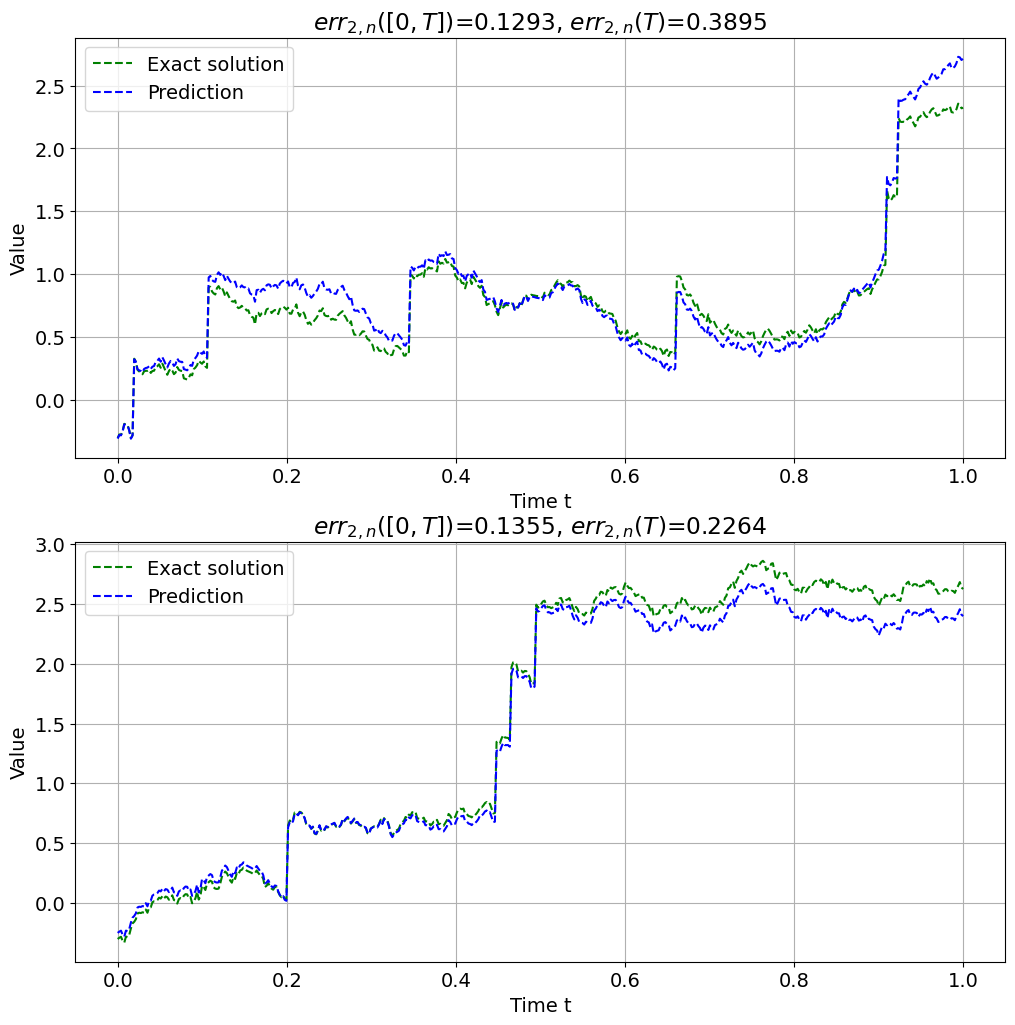}
    \end{minipage}

    \caption{Networks sample trajectories driven by Poisson and sum of Poisson and Wiener process for example 2.}
    \label{fig:sin_poisson_wiener}
\end{figure}

We examine the multidimensional Example 3 in two cases. In the first case, $L$ is a four-dimensional Wiener process, while in the second case, it is a four-dimensional process given by the sum of a Wiener and a Poisson process. The results are presented in Figure~\ref{fig:3dim}.

\begin{figure}[h!]
    \centering
    \begin{minipage}[b]{0.48\textwidth}
        \centering
        \includegraphics[width=\textwidth]{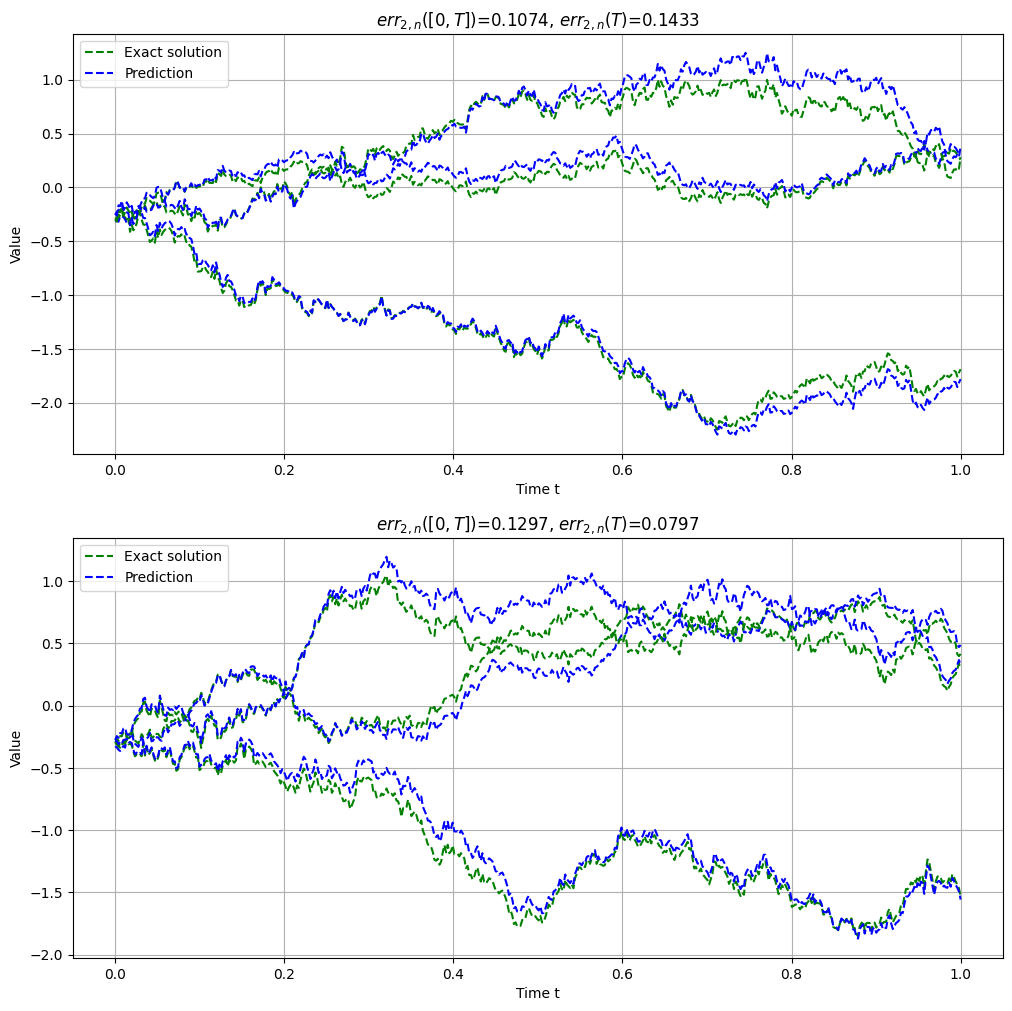}
    \end{minipage}
    \hfill
    \begin{minipage}[b]{0.48\textwidth}
        \centering
        \includegraphics[width=\textwidth]{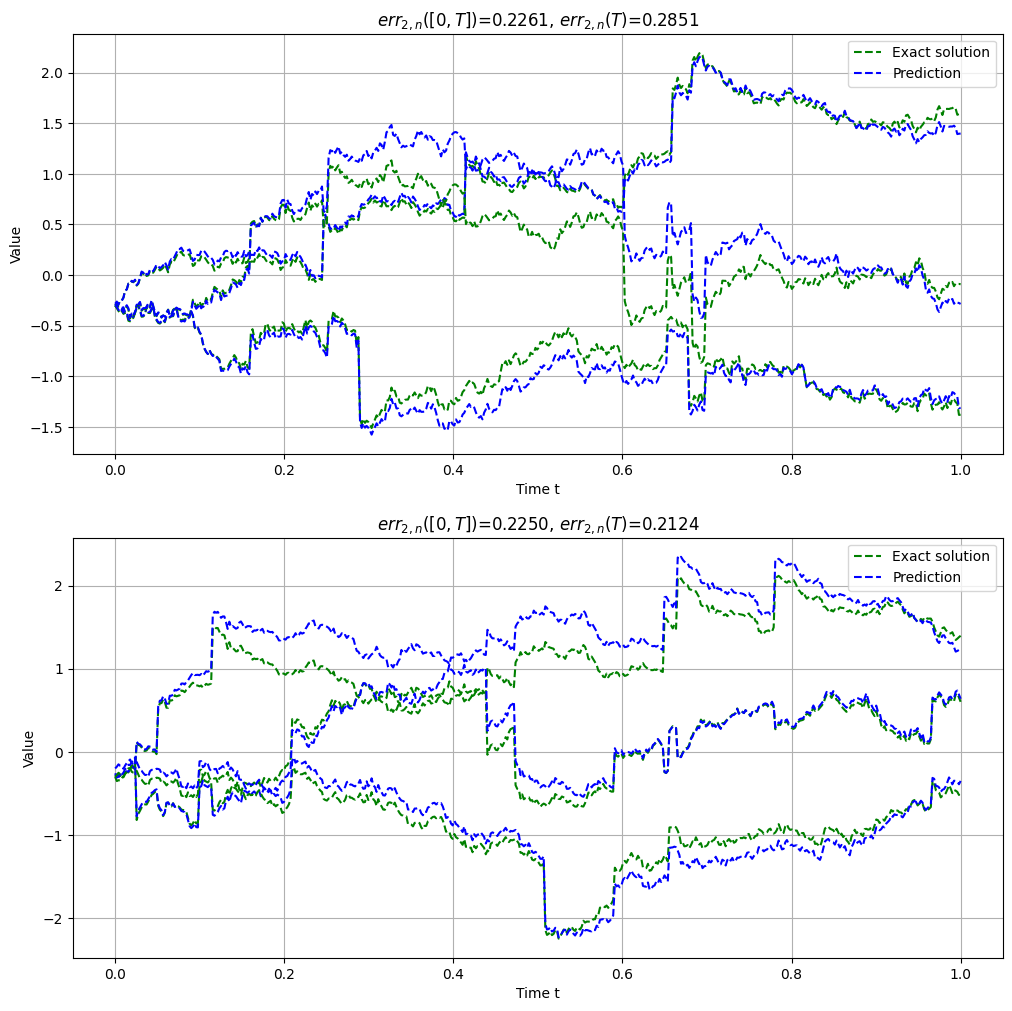}
    \end{minipage}

    \caption{Networks sample trajectories driven by Wiener and sum of Poisson and Wiener process for example 3.}
    \label{fig:3dim}
\end{figure}






\section{Appendix}
\begin{lemma}
\label{equiv_def_c1}
    For all $(\mathcal{F}_t)_{t\geq 0}$-adapted and continuous $\mathbb{R}^d$-valued stochastic processes $(y(t))_{t\in [0,T]}$, that have $(\mathcal{F}_t)_{t\geq 0}$-adapted and c\`adl\`ag  derivative processes $(y'(t))_{t\in [0,T]}$, the following conditions are equivalent:
    \begin{itemize}
        \item [(i)] $\max\Bigl\{\mathbb{E}\|y(0)\|^2,\mathbb{E}\int\limits_0^T\|y'(t)\|^2dt\Bigr\}<+\infty$,
        \item [(ii)] $\|y\|_1<+\infty$.
    \end{itemize}
\end{lemma}
\begin{proof}
    The implication (ii)$\Rightarrow$ (i) is straightforward. We now show (i)$\Rightarrow$ (ii). By Theorem 11 in \cite{HT_1} we have almost surely for all $t\in [0,T]$ that
    \begin{equation}
        y(t)=y(0)+\int\limits_0^t y'(s)ds.
    \end{equation}
    Therefore
    \begin{equation}
        \mathbb{E}\Bigl(\sup\limits_{0\leq t\leq T}\|y(t)\|^2\Bigr)\leq 2\mathbb{E}\|y(0)\|^2+2T\mathbb{E}\int\limits_0^T\|y'(t)\|^2dt<+\infty,
    \end{equation}
    which implies that $\|y\|_1<+\infty$.
\end{proof}
We present here auxiliary properties of the processes $Y$.
\begin{proposition}
\label{prop_sol_Y}
\begin{itemize}
    \item [(i)]
    Under assumptions (A1), (A2), the following hold:
    \begin{equation}
    \label{est_Y_1}
        \mathbb{P}\Biggl(\sup\limits_{0\leq t\leq T}\|Y(t)\|\leq C_3\Bigl(1+\int\limits_0^T\|L(t)\|dt\Bigr)<+\infty\Biggr)=1,
    \end{equation}
        \begin{equation}
        \label{est_DY_1}\mathbb{P}\Biggl(\int\limits_0^T\|Y'(t)\|^2dt\leq C_4\Bigl(1+\int\limits_0^T\|L(t)\|^2dt\Bigr)<+\infty\Biggr)=1,
    \end{equation}
    where
    \begin{equation}
        C_3=e^{C_2T}(1+\|x_0\|)\max\{1,C_2,C_2T\},
    \end{equation}
    \begin{equation}
        C_4=3C_2^2\max\{1,T\}(1+2C_3^2 T),
    \end{equation}
    and $C_2$ is defined in \eqref{def_C2}.
    \item [(ii)] Under assumptions (A1), (A2), (A3) the following hold:  
    \begin{equation}
        \mathbb{P}\Bigl(\sup\limits_{0\leq t\leq T}\|Y(t)\|\leq \|x_0\|+D_0T\Bigr)=1,
    \end{equation}
    \begin{equation}
\mathbb{P}\Bigl(\int\limits_0^T\|Y'(t)\|^2dt\leq D_0^2T\Bigr)=1.
    \end{equation}
    \end{itemize}
\end{proposition}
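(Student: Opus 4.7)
The plan is to work pathwise on the set of full probability on which the RODE \eqref{main_equation23} holds and on which $L$ has càdlàg paths; on this set $Y$ satisfies the integral identity $Y(t) = x_0 + \int_0^t f(s, Y(s), L(s))\,ds$ for every $t \in [0,T]$, and $\int_0^T \|L(s)\|\,ds < +\infty$ automatically because càdlàg paths are bounded on compact intervals.

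For (i), first I would take norms in the integral identity and invoke the linear-growth bound $\|f(s,y,w)\| \leq C_2(1 + \|y\| + \|w\|)$ from Fact \ref{prop_f} to obtain
\begin{equation*}
\|Y(t)\| \leq \|x_0\| + C_2 T + C_2\int_0^T \|L(s)\|\,ds + C_2\int_0^t \|Y(s)\|\,ds.
\end{equation*}
Since the first three terms are constant in $t$, Grönwall's inequality gives $\sup_{0\leq t\leq T}\|Y(t)\| \leq \bigl(\|x_0\| + C_2 T + C_2\int_0^T\|L(s)\|\,ds\bigr) e^{C_2 T}$. Bounding $\|x_0\| + C_2 T + C_2\int_0^T\|L(s)\|\,ds \leq \max\{1,C_2,C_2 T\}(1+\|x_0\|)\bigl(1+\int_0^T\|L(s)\|\,ds\bigr)$ produces the claimed constant $C_3$, and the right-hand side is a.s. finite by the càdlàg observation above.

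For the derivative estimate in (i), I would use $\|Y'(t)\|^2 = \|f(t,Y(t),L(t))\|^2 \leq 3C_2^2\bigl(1 + \|Y(t)\|^2 + \|L(t)\|^2\bigr)$, integrate on $[0,T]$, then bound $\int_0^T\|Y(t)\|^2\,dt \leq T\sup_{0\leq t\leq T}\|Y(t)\|^2$ via the estimate just obtained, and finally use the Cauchy-Schwarz inequality $\bigl(\int_0^T\|L(s)\|\,ds\bigr)^2 \leq T\int_0^T\|L(s)\|^2\,ds$ to convert the $L^1$-norm of $L$ into an $L^2$-norm. Collecting the coefficients $3C_2^2$, $\max\{1,T\}$ and $1 + 2C_3^2 T$ yields the bound in the form $C_4\bigl(1 + \int_0^T \|L(t)\|^2\,dt\bigr)$.

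For (ii), assumption (A3) gives $\|f(t,y,w)\| = \|a(t,y+\sigma w)\| \leq D_0$ pointwise, so the Grönwall step is unnecessary: the integral identity directly yields $\|Y(t)\| \leq \|x_0\| + D_0 T$, and integrating $\|Y'(t)\|^2 \leq D_0^2$ on $[0,T]$ gives $\int_0^T\|Y'(t)\|^2\,dt \leq D_0^2 T$.

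The argument is essentially a direct application of Grönwall's inequality and the estimates from Fact \ref{prop_f}; the only delicate step is organising the algebra so that the ambient constants take exactly the packaged forms $C_3$ and $C_4$ stated in the proposition, which is purely a matter of bookkeeping rather than a genuine obstacle.
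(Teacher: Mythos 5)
Your argument is correct and follows essentially the same route as the paper's proof: the pathwise integral identity for $Y$, the linear-growth bound from Fact \ref{prop_f}, Gr\"onwall's inequality for \eqref{est_Y_1}, and then the pointwise bound on $\|Y'\|^2$ together with the Cauchy--Schwarz step $\bigl(\int_0^T\|L\|\,ds\bigr)^2\leq T\int_0^T\|L\|^2\,ds$ for \eqref{est_DY_1}, with part (ii) handled directly via (A3). The only difference is cosmetic: you explicitly note that c\`adl\`ag paths are bounded on compacts to justify a.s.\ finiteness, which the paper states without comment.
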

\begin{proof}
    We have with probability one for all $t\in [0,T]$ that
    \begin{eqnarray}
        &\|Y(t)\|\leq \|x_0\|+\int\limits_0^t \|f(s,Y(s),L(s))\|ds\leq \|x_0\|+C_2\int\limits_0^t (1+\|Y(s)\|+\|L(s)\|)ds\notag\\
        &\leq \max\{1,C_2,C_2T\}\Bigl(1+\|x_0\|+\int\limits_0^T\|L(s)\|ds\Bigr)+C_2\int\limits_0^t\|Y(s)\|ds.
    \end{eqnarray}
    Since almost all trajectories of $(Y(t))_{t\in [0,T]}$ are continuous and $\mathbb{P}\Bigl(\int\limits_0^T\|L(s)\|^2ds<+\infty\Bigr)=1$, we get by the Gronwall lemma that for all $t\in [0,T]$
    \begin{equation}
        \|Y(t)\|\leq e^{C_2 t}\max\{1,C_2,C_2T\}\Bigl(1+\|x_0\|+\int\limits_0^T\|L(s)\|ds\Bigr)
    \end{equation}
    almost surely. This gives \eqref{est_Y_1}. The estimate \eqref{est_DY_1} follows from \eqref{est_Y_1}, Fact \ref{prop_f} and the fact that
    \begin{equation}
        \int\limits_0^T\|Y'(t)\|^2dt\leq 3C_2^2\Bigl(T+\int\limits_0^T\|Y(t)\|^2dt+\int\limits_0^T\|L(t)\|^2dt\Bigr).
    \end{equation}
    The proof of (ii) is straightforward and, therefore, omitted.
\end{proof}
Directly from Proposition \ref{prop_sol_Y} we get the following corollary.
\begin{corollary}
\label{cor_Y_in_C1}
    \begin{itemize}
        \item [(i)] If $\mathbb{E}\int\limits_0^T\|L(t)\|^2dt<+\infty$ and assumptions (A1), (A2) hold then the unique solution $(Y(t))_{t\in [0,T]}$ of \eqref{main_equation23} belongs to $\mathcal{D}^1\left([0,T]\times\Omega;\mathbb{R}^d\right)$.
        \item [(ii)] If $\mathbb{E}\int\limits_0^T\|L(t)\|^2dt=+\infty$ and assumptions (A1), (A2), (A3) hold then the unique solution $(Y(t))_{t\in [0,T]}$ of \eqref{main_equation23} belongs to $\mathcal{D}^1\left([0,T]\times\Omega;\mathbb{R}^d\right)$.
    \end{itemize}
\end{corollary}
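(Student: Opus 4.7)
The plan is to verify each of the four requirements of the space $\mathcal{C}^1([0,T]\times\Omega;\mathbb{R}^d)$ in turn, using Proposition \ref{prop_sol_Y} as the main quantitative input. Two of the four requirements, namely adaptedness of $Y$ and of $Y'$, together with the continuity of the trajectories of $Y$ and the c\`adl\`ag property of the trajectories of $Y'$, have already been recorded in the paragraph following \eqref{main_equation23}: indeed $Y=X-\sigma L$ is adapted with continuous paths, while $Y'(t)=f(t,Y(t),L(t))$ is adapted and c\`adl\`ag because $f$ is continuous (Fact \ref{prop_f}), $Y$ is continuous and $L$ is c\`adl\`ag. It therefore remains to check the three moment conditions
\begin{equation*}
\mathbb{E}\|Y(0)\|^2<+\infty,\qquad \mathbb{E}\int\limits_0^T\|Y(t)\|^2\rd t<+\infty,\qquad \mathbb{E}\int\limits_0^T\|Y'(t)\|^2\rd t<+\infty.
\end{equation*}
The first is immediate since $Y(0)=x_0$ is deterministic, so $\mathbb{E}\|Y(0)\|^2=\|x_0\|^2$.

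For part (i), I would take the a.s. bound \eqref{est_Y_1}, namely $\sup_{0\leq t\leq T}\|Y(t)\|\leq C_3\bigl(1+\int_0^T\|L(s)\|\rd s\bigr)$, square it, apply the elementary inequality $(a+b)^2\leq 2a^2+2b^2$, and then bound the resulting integral term by Cauchy–Schwarz:
\begin{equation*}
\Bigl(\int\limits_0^T\|L(s)\|\rd s\Bigr)^2\leq T\int\limits_0^T\|L(s)\|^2\rd s.
\end{equation*}
Taking expectations and using Tonelli together with the hypothesis $\mathbb{E}\int_0^T\|L(t)\|^2\rd t<+\infty$ yields $\mathbb{E}\sup_{0\leq t\leq T}\|Y(t)\|^2<+\infty$, which in particular implies $\mathbb{E}\int_0^T\|Y(t)\|^2\rd t<+\infty$. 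The bound on $\mathbb{E}\int_0^T\|Y'(t)\|^2\rd t$ is even more direct: estimate \eqref{est_DY_1} gives the pathwise inequality $\int_0^T\|Y'(t)\|^2\rd t\leq C_4\bigl(1+\int_0^T\|L(t)\|^2\rd t\bigr)$, and taking expectations gives finiteness immediately.

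For part (ii), the bounds in Proposition \ref{prop_sol_Y}(ii) are deterministic: $\sup_{0\leq t\leq T}\|Y(t)\|\leq\|x_0\|+D_0T$ and $\int_0^T\|Y'(t)\|^2\rd t\leq D_0^2T$ hold with probability one. Taking expectations is then trivial and the three moment conditions follow at once, with no integrability assumption on $L$ required. This matches the role of assumption (A3): it replaces the second moment condition on $L$ by a uniform sup-bound on $a$.

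I do not expect any serious obstacle; the only non-cosmetic step is the Cauchy–Schwarz passage from $\int_0^T\|L(s)\|\rd s$ to $\int_0^T\|L(s)\|^2\rd s$ in case (i), which is needed because \eqref{est_Y_1} controls the sup by the $L^1$-norm of $L$ while the hypothesis supplies control only on the $L^2$-norm.
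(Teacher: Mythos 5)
Your proposal is correct and follows exactly the route the paper intends: the corollary is stated as a direct consequence of Proposition \ref{prop_sol_Y} (no written proof is given there), and your argument simply makes explicit the passage from the pathwise bounds \eqref{est_Y_1}--\eqref{est_DY_1} to the moment conditions, including the Cauchy--Schwarz step needed in case (i) and the observation that adaptedness and path regularity were already recorded after \eqref{main_equation23}. Nothing is missing.
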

We believe that the lemma below is well-known. However, we were unable to find a direct statement or proof, so, for the reader's convenience, we provide its justification.
\begin{lemma}
\label{zero_lem_1}
Let $f:[0,T]\to[0,+\infty)$ be c\`adl\`ag and $
\int_{0}^{T} f(t)dt = 0$. Then $f(t)=0$ for all $t\in[0,T)$.
\end{lemma}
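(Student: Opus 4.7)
The plan is to argue by contradiction using the right-continuity built into the càdlàg property. Suppose there exists $t_0\in[0,T)$ with $f(t_0)>0$. I will exhibit a small interval on the right of $t_0$ on which $f$ stays bounded below by a positive constant, which will force $\int_0^T f(t)\,dt>0$ and contradict the hypothesis.

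Concretely, since $f$ is càdlàg on $[0,T]$ it is right-continuous at $t_0\in[0,T)$, so for $\varepsilon=f(t_0)/2>0$ there exists $\delta>0$ such that for every $t\in[t_0,t_0+\delta)\cap[0,T]$ one has $|f(t)-f(t_0)|<\varepsilon$, hence $f(t)>f(t_0)/2$. Put $\delta':=\min\{\delta,T-t_0\}>0$ (which is strictly positive because $t_0<T$). Then $[t_0,t_0+\delta')\subset[0,T]$ and on this interval $f\geq f(t_0)/2$. Using nonnegativity of $f$ elsewhere, I would estimate
\begin{equation*}
0=\int_0^T f(t)\,dt\ \geq\ \int_{t_0}^{t_0+\delta'} f(t)\,dt\ \geq\ \frac{f(t_0)}{2}\cdot\delta'\ >\ 0,
\end{equation*}
which is the desired contradiction. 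Hence $f(t)=0$ for every $t\in[0,T)$.

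The only subtlety, and the place where the restriction to $[0,T)$ (rather than all of $[0,T]$) becomes essential, is that càdlàg does not give right-continuity at the endpoint $T$; that is why the argument cannot be run at $t_0=T$ and the statement correctly excludes that point. Otherwise the proof is a one-line application of right-continuity plus monotonicity of the integral, so I do not expect any real obstacle.
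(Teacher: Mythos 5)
Your proof is correct and follows essentially the same route as the paper's: contradiction via right-continuity at a point $t_0\in[0,T)$ where $f(t_0)>0$, yielding a right-neighbourhood on which $f\geq f(t_0)/2$ and hence a strictly positive integral. Your explicit $\delta'=\min\{\delta,T-t_0\}$ matches the paper's $\min\{\delta,T-t_0\}$ factor in the final estimate, and your remark about why $t_0=T$ must be excluded is a correct observation consistent with the statement.
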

\begin{proof} 
Fix any $t_{0}\in[0,T)$. Suppose for contradiction that $f(t_{0})>0$.
Since $f$ is right-continuous at $t_{0}$, there exists $\delta>0$ such that for all $t\in[t_{0},t_{0}+\delta)\cap [0,T]$
\begin{displaymath}
|f(t)-f(t_{0})|<\frac{f(t_{0})}{2},
\end{displaymath}
and hence $f(t) >\frac{f(t_{0})}{2}>0$, for all $t\in[t_{0},t_{0}+\delta)\cap [0,T]$.
Then
\[
\int_{0}^{T} f(t)\,dt
\;\ge\;
\int\limits_{[t_0,t_0+\delta)\cap [0,T]} f(t)dt\geq
\min\{\delta,T-t_0\}\frac{f(t_{0})}{2}>0,
\]
contradicting the assumption that $\int_{0}^{T} f(t)\,dt = 0$.
Hence $f(t_{0})=0$. Since $t_{0}\in[0,T)$ was arbitrary, we conclude that $f(t)=0$ for all  $t\in[0,T)$, which ends the proof.
\end{proof}
\section{Conclusion and future work}
In this paper, we have introduced the StPINNs, the extension of the classical PINNs. Our approach allows us to solve SDEs \eqref{main_equation}, driven by an additive L\'evy process, with artificial neural networks. The derivation performed within this paper shows that there is a rich mathematical structure behind StPINNs, and throughout mathematical investigations should be done in order to state convergence theorems, etc., rigorously. In our future work, we plan to address some of these problems and also consider the SDEs with multiplicative noise case.


\begin{thebibliography}{22}
\bibitem{applebaum}
D. Applebaum, {\em L\'{e}vy Processes and Stochastic Calculus, second ed.}, Cambridge University Press, Cambridge, 2009.

\bibitem{PBaldi}
P. Baldi, {\em Stochastic Calculus - An Introduction Through Theory and Exercises}, Springer, 2017




\bibitem{PINNs_survey}
Cuomo, S., Di Cola, V.S., Giampaolo, F. et al. Scientific Machine Learning Through Physics–Informed Neural Networks: Where we are and What’s Next. J Sci Comput 92, 88 (2022). https://doi.org/10.1007/s10915-022-01939-z

\bibitem{CC_1}
T. Chen, H. Chen, Universal approximation to nonlinear operators by neural networkswith arbitrary
activation functions and its application to dynamical systems, IEEE Trans. Neural Netw., 6(4):911–917, 1995.

\bibitem{Doss_1}
H. Doss. Liens entre \`equations diff\`erentielles stochastiques et ordinaires. Ann. Inst. H. Poincar\`e Sect. B(N.S), 13:99–125, 1977.

\bibitem{HT_1}
J. W. Hagood, B. S.Thomson,  Recovering a Function from a Dini Derivative, {\it The Amer. Math. MontH.} {\bf 113(1)} (2006), 34--46

\bibitem{LB_1}
E. Hoyle, L. P. Hughston, A. Macrina,
L\'evy random bridges and the modelling of financial information,
{\em Stochastic Processes and their Applications} {\bf 121} (2011), 856--884,


\bibitem{sdes_nn}
A. Kaluża, P. M. Morkisz, B. Mulewicz, P. Przybyłowicz,  M. Wiącek, Deep learning-based estimation of time-dependent parameters in Markov models with application to nonlinear regression and SDEs, {\em Applied Mathematics and Computation} {\bf 480} (2024), 128906.

\bibitem{OP_DONET}
L. Lu, G. Pang, P. Jin, Z. Zhang, and G. E. Karniadakis, Learning nonlinear operators via deeponet based
on the universal approximation theorem of operators, Nat. Mach. Intell., 3:218–229, 2021.

\bibitem{ajen}
{\sc A. Jentzen, B. Kuckuck, P. von Wurstemberger}, {\it Mathematical Introduction to Deep Learning: Methods, Implementations, and Theory}, https://arxiv.org/abs/2310.20360

\bibitem{ZTK_1}
Y. Zhu, Y-H. Tang, C. Kim, Learning stochastic dynamics with
statistics-informed neural network, {\it Journal of Computational
Physics} {\bf 474} (2023), 111819

\bibitem{NS_1}
A. Neufeld, P. Schmocker, Solving stochastic partial differential equations using neural networks in the Wiener chaos expansion, https://arxiv.org/abs/2411.03384




 












 




\bibitem{PARRAS}
E. Pardoux, E. Rascanu,
{\em Stochastic Differential Equations,
Backward SDEs, Partial Differential Equations}, Springer International Publishing Switzerland, 2014

\bibitem{PSSS_1}
P. Przyby\l owicz, Verena Schwarz, Alexander Steinicke, Michaela
Sz\"olgyenyi, A Skorohod measurable universal functional representation of solutions to semimartingale SDEs, {\em Stochastic Analysis and Applications}  {\bf 42:6} (2024), 1137-1155

\bibitem{Protter}
P. Protter, {\em Stochastic Integration and Differential Equations, second ed.}, Springer-Verlag Berlin Heidelberg, 2005.

\bibitem{PINNS_O1}
M. Raissi, P. Perdikaris, G.E. Karniadakis, Physics-informed neural networks: A deep learning framework for solving forward and inverse problems involving nonlinear partial differential equations, {\em Journal of Computational Physics} {\bf 378} (2019), 686-707,

\bibitem{robsieg}
{\sc H. Robbins, D. Siegmund}, A convergence theorem for non negative almost supermartingales and some applications, Optimizing Methods in Statistics, Academic Press, 1971, 233--257,





\bibitem{SSAS}
S. S\"arkk\"a, A. Solin, {\em Applied Stochastic Differential Equations}, Cambridge University Press, 2019.

\bibitem{Suss_1}
H.J. Sussmann. On the Gap Between Deterministic and Stochastic Ordinary Differential
Equations, {\em Ann. Probab.}, 6(1):19–41, 02 1978.

\bibitem{LPM_1} 
J. O’Leary, J. A. Paulson, A. Mesbah, Stochastic physics-informed neural ordinary differential equations, {\em J. Comput. Phys.} {\bf 468} (2022), 111466


\bibitem{KS_1}
K. Spendier, {\em On Numerical Methods for Stochastic Differential Equations}, PhD Thesis, University of Klagenfurt, 2024, https://netlibrary.aau.at/obvuklhs/content/titleinfo/10238204


\end{thebibliography}
\end{document}